\newtheorem{theorem}{Theorem}
\newtheorem{lemma}[theorem]{Lemma}
\newtheorem{proposition}[theorem]{Proposition}
\newtheorem{definition}[theorem]{Definition}
\newtheorem{remark}[theorem]{Remark}
\newcommand{\R}{\mathbb{{R}}}
\newcommand{\Z}{\mathbb{{Z}}}
\newcommand{\N}{\mathbb{{N}}}
\newcommand{\T}{\mathbb{{T}}}
\newcommand{\C}{\mathbb{{C}}}
\begin{document}

\title[A Katznelson-Tzafriri type theorem for Ces\`aro bounded operators]{A Katznelson-Tzafriri type theorem for Ces\`aro bounded operators}

\author[L. Abadias]{Luciano Abadias}
\address{Departamento de Matem\'aticas, Instituto Universitario de Matem\'aticas y Aplicaciones, Universidad de Zaragoza, 50009 Zaragoza, Spain.}
\email{labadias@unizar.es}


\subjclass[2010]{47A35, 47A10, 26A33.}

\keywords{Ces\`aro bounded, functional calculus, convolution algebras, spectral synthesis, Weyl differences}

\begin{abstract}
We extend the well-known Katznelson-Tzafriri theorem, originally posed for power-bounded operators, to the case of Ces\`aro bounded operators of any order $\alpha>0.$ For this purpose, we use a functional calculus between a new class of fractional Wiener algebras and the algebra of bounded linear operators, defined for operators with the corresponding Ces\`aro boundedness. Finally, we apply the main theorem to get ergodicity results for the Ces\`aro means of bounded operators.
\end{abstract}

\date{}

\maketitle

\section{Introduction}
\setcounter{theorem}{0}
\setcounter{equation}{0}

Let $A(\mathbb{T})$ be the convolution Wiener algebra formed by all continuous periodic functions $\mathfrak{f}(t)=\sum_{n=-\infty}^{\infty}a(n) e^{int},$ for $t\in [0,2\pi),$ with the norm $\lVert \mathfrak{f} \rVert_{A(\mathbb{T})}:=\sum_{n=-\infty}^{\infty}|a(n)|.$ This algebra is regular. We denote by $A_+(\mathbb{T})$ the convolution closed subalgebra of $A(\mathbb{T})$ where the functions satisfy that $a(n)=0$ for $n<0.$ Note that $A(\mathbb{T})$ and $\ell^1(\Z)$ are isometrically isomorphic. The same holds for $A_+(\mathbb{T})$ and $\ell^1(\N_0),$ where $\N_0=\N\cup\{0\}.$ In the above, the sequence $(a(n))_{n\in\Z}$ corresponds to the Fourier coefficients of $\mathfrak{f},$ that is $$a(n):=\widehat{\mathfrak{f}}(n)=\frac{1}{2\pi}\int_{0}^{2\pi}\mathfrak{f}(t)e^{-int}\,dt.$$ Let $E$ be a closed subset of $\mathbb{T}$ and $\mathfrak{f}\in A(\mathbb{T}).$ We recall that $\mathfrak{f}$ is of spectral synthesis with respect to $E$ if for every $\varepsilon>0$ there exists $\mathfrak{f}_{\varepsilon}\in A(\mathbb{T})$ such that $\lVert \mathfrak{f}-\mathfrak{f}_{\varepsilon} \rVert_{A(\mathbb{T})}<\varepsilon$ with $\mathfrak{f}_{\varepsilon}= 0$ in a neighborhood of $E.$ The above definition is valid in any regular Banach algebra. For more details see \cite[Chapter VIII, Section 7]{Kat}. Since $\sup_{t\in[0,2\pi)}|\mathfrak{f}(t)|\leq \lVert \mathfrak{f}\rVert_{A(\T)},$ if $\mathfrak{f}$ is of spectral synthesis with respect to $E,$ then $f$ vanishes on $E.$

Let $X$ be a complex Banach space and $\mathcal{B}(X)$ the Banach algebra formed by the bounded linear operators on $X$. An operator $T\in\mathcal{B}(X)$ is power-bounded if $\sup_{n\geq 0}\lVert T^n \rVert<\infty.$ In 1986, Y. Katznelson and L. Tzafriri proved that if $T$ is a power-bounded operator on $X$ and $\mathfrak{f}\in A_+(\mathbb{T})$ is of spectral synthesis in $A(\mathbb{T})$ with respect to $\sigma(T)\cap \mathbb{T},$ then $$\displaystyle\lim_{n\to\infty}\lVert T^n\theta(\widehat{\mathfrak{f}}) \rVert=0,$$ where $\sigma(T)$ denotes the spectrum of the operator $T$ and $\theta:\ell^1(\N_0)\to \mathcal{B}(X)$ is the functional calculus given by $$\theta(f):=\displaystyle\sum_{j=0}^{\infty}f(j)T^j,\quad x\in X,\,f\in \ell^1(\N_0),$$ see \cite[Theorem 5]{Katznelson}. Moreover, for $T\in\mathcal{B}(X)$ power-bounded, $\displaystyle\lim_{n\to\infty}\lVert T^n-T^{n+1}\rVert=0$ if and only if $\sigma(T)\cap \mathbb{T}\subseteq\{1\},$ see \cite[Theorem 1]{Katznelson}.

L\'eka (\cite{L}) proved that for $T$ power-bounded in a Hilbert space $H,$ the result \cite[Theorem 5]{Katznelson} holds if $\mathfrak{f}\in A_+(\mathbb{T})$ just vanishes on the peripheral spectrum. For contractions on $H$ this had been proved in \cite{Es-St_Zo2}. When the Fourier coefficients of $\mathfrak{f}$ satisfy $\sum_{j\geq 0}j|\widehat{\mathfrak{f}}(j)|<\infty,$ the same holds in any Banach space (\cite{AOR}). In the case that $T$ is $(C,1)$-bounded and $\sigma(T)\cap \mathbb{T}=\{1\},$ but $T$ is not power-bounded, $\lVert T^n-T^{n+1}\rVert$ does not need converge to zero. The first counter-examples are in \cite{To-Ze}. There is a counter-example in a Hilbert space with $\sigma(T)=\{1\}$ in \cite{L2}.

A similar result for $C_0$-semigroups was proved simultaneously in two papers, \cite{Es-St_Zo} and \cite{Vu}. The result states that if $(T(t))_{t\geq 0}\subset \mathcal{B}(X)$ is a bounded $C_0$-semigroup generated by $A$ and $\mathfrak{f}\in L^1(\R_+)$ is of spectral synthesis in $L^1(\R)$ with respect to $i\sigma(A)\cap \R,$ then $$\displaystyle\lim_{t\to\infty}\lVert T(t)\Theta(\mathfrak{f}) \rVert=0,$$ where $\Theta:L^1(\R_+)\to\mathcal{B}(X)$ is the Hille functional calculus given by $$\Theta(\mathfrak{f})x:=\displaystyle\int_{0}^{\infty}\mathfrak{f}(t)T(t)x,\quad x\in X,\, \mathfrak{f}\in L^1(\R_+).$$ In the paper \cite[Theorem 5.5]{Ch-To}, there is a nice proof of this result, which has inspired the proof of the main theorem of this paper (Theorem \ref{main}).

In \cite{GMM}, the authors give a similar theorem for $\alpha$-times integrated semigroups: let $\alpha>0,$  $(T_{\alpha}(t))_{t\geq 0}\subset \mathcal{B}(X)$ be an $\alpha$-times integrated semigroup generated by $A$ such that $\sup_{t>0}t^{-\alpha}\lVert T_{\alpha}(t) \rVert$ $<\infty,$ and let $\mathfrak{f}\in \mathcal{T}_+^{(\alpha)}(t^{\alpha})$ be of spectral synthesis in $\mathcal{T}^{(\alpha)}(|t|^{\alpha})$ (both are Sobolev subalgebras of $L^1(\R_+)$ and $L^1(\R)$ respectively which have been studied in detail in \cite{GM}) with respect to $i\sigma(A)\cap \R.$ Then $$\displaystyle\lim_{t\to\infty}t^{-\alpha}\lVert T_{\alpha}(t)\Theta_{\alpha}(\mathfrak{f}) \rVert=0,$$ where $\Theta_{\alpha}:\mathcal{T}_+^{(\alpha)}(t^{\alpha})\to\mathcal{B}(X)$ is the bounded algebra homomorphism defined by $$\Theta_{\alpha}(\mathfrak{f})x:=\displaystyle\int_{0}^{\infty}\mathcal{W}_+^{\alpha}\mathfrak{f}(t)T_{\alpha}(t)x,\quad x\in X,\, \mathfrak{f}\in \mathcal{T}_+^{(\alpha)}(t^{\alpha})$$ and $\mathcal{W}_+^{\alpha}\mathfrak{f}$ is the Weyl fractional derivative of order $\alpha$ of $\mathfrak{f}.$

Let $\alpha>0$ and $T\in\mathcal{B}(X).$ The {\it Ces\`{a}ro sum} of order $\alpha>0$ of $T$ is the family of operators $(\Delta^{-\alpha} \mathcal{T}(n))_{n\in \N_0}\subset \mathcal{B}(X)$ defined by
\begin{equation*}
\Delta^{-\alpha} \mathcal{T}(n)x := \displaystyle\sum_{j=0}^n k^{\alpha}(n-j)T^j x, \qquad x\in X, \quad n\in \N_0,
\end{equation*}
and the {\it Ces\`{a}ro mean} of order $\alpha>0$ of $T$ is the family of operators $(M^{\alpha}_T(n))_{n\in\N_0}$ given by $$M_T^{\alpha}(n)x:=\frac{1}{k^{\alpha+1}(n)}\Delta^{-\alpha} \mathcal{T}(n)x,\qquad x\in X, \quad n\in \N_0,$$
where
\begin{equation*}\label{eq1}
k^{\alpha}(n) := \frac{\Gamma(\alpha +n)}{\Gamma(\alpha) \Gamma(n+1)}={n+\alpha-1\choose \alpha-1}, \qquad n\in  \N_0,
\end{equation*}
is the {\it Ces\`{a}ro kernel} of order $\alpha.$ When the Ces\`{a}ro mean of order $\alpha$ of $T$ is uniformly bounded, that is, \begin{equation*}
\sup_{n} \|M_T^{\alpha}(n) \| < \infty,
\end{equation*}
it is said that the operator $T$ is {\it Ces\`{a}ro bounded} of order $\alpha$ or simply {\it $(C,\alpha)$-bounded}. We extend the Ces\`{a}ro kernel for $\alpha=0$ using that $k^0(n):=\lim_{\alpha\to 0^+}k^{\alpha}(n)=\delta_{n,0}$ for $n\in\N_0,$ where $\delta_{n,j}$ for $n,j\in\Z$ is the known Kronecker delta, i.e., $\delta_{n,j}=1$ if $j=n$ and $0$ otherwise. Then $(C,0)$-boundedness is equivalent to power-boundedness, and for $\alpha=1$ the operator $T$ is said to be Ces\`{a}ro mean bounded (or simply Ces\`{a}ro bounded). From formulas (1.10) and (1.17) in \cite[p. 77]{Zygmund} it can be proved that for $\beta>\alpha\geq 0$ we have $\sup_{n}\lVert M_T^{\beta}(n)\rVert\leq \sup_{n}\lVert M_T^{\alpha}(n)\rVert\leq\sup_n\lVert T^n\rVert$; in particular if $T$ is a power-bounded operator then $T$ is a $(C, \alpha)$ bounded operator for any $\alpha>0$. The converse is not true in general, see \cite[Propositions 4.3 and 4.4]{LSS}. The Assani matrix $$T= \left( \begin{array}{rrr}
-1 & 2  \\
 0 & -1  \\
 \end{array} \right)
$$ is $(C,1)$-bounded but it is not power bounded since $$T^n= \left( \begin{array}{rrr}
(-1)^n & (-1)^{n+1}2n  \\
 0 & (-1)^n  \\
 \end{array} \right),\quad n\in\N_0,$$ see \cite[Section 4.7]{E2} and \cite[Remark 2.3]{Su-Ze13}.

\begin{lemma}\label{radius} Let $\alpha>0.$ If $T$ is $(C,\alpha)$-bounded, then it has spectral radius $r(T)\leq 1.$
\end{lemma}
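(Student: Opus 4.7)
The plan is to show that $\|T^n\|$ grows at most polynomially in $n$; Gelfand's formula $r(T) = \lim_n \|T^n\|^{1/n}$ then yields $r(T) \leq 1$ at once.

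The hypothesis $\sup_n \|M_T^\alpha(n)\| < \infty$ together with the asymptotic $k^{\alpha+1}(n) \sim n^\alpha/\Gamma(\alpha+1)$ immediately gives a constant $C>0$ with $\|\Delta^{-\alpha}\mathcal{T}(n)\| \leq C\, k^{\alpha+1}(n)$ for every $n$. To recover $T^n$ from the Cesàro sums, I would exploit the convolutive structure of the kernels. The generating function $\sum_n k^\alpha(n) z^n = (1-z)^{-\alpha}$ yields the semigroup law $k^\alpha * k^\beta = k^{\alpha+\beta}$, so on defining $k^{-\alpha}(n) := (-1)^n \binom{\alpha}{n}$ (the coefficients of $(1-z)^\alpha$) one has $k^\alpha * k^{-\alpha} = \delta_0$. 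A direct Cauchy-product computation, rearranging $\sum_{j=0}^n k^{-\alpha}(n-j)\sum_{i=0}^j k^\alpha(j-i) T^i$ by collecting powers of $T$, then gives the exact inversion
\begin{equation*}
T^n = \sum_{j=0}^n k^{-\alpha}(n-j)\, \Delta^{-\alpha}\mathcal{T}(j), \qquad n \in \mathbb{N}_0.
\end{equation*}

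Substituting the bound on $\|\Delta^{-\alpha}\mathcal{T}(j)\|$ and using that $k^{\alpha+1}$ is non-decreasing (clear from $k^{\alpha+1}(n+1)/k^{\alpha+1}(n) = (n+\alpha+1)/(n+1)\geq 1$), one obtains
\begin{equation*}
\|T^n\| \leq C\, k^{\alpha+1}(n) \sum_{m=0}^{n} |k^{-\alpha}(m)|.
\end{equation*}
The one technical ingredient is the absolute summability of $(k^{-\alpha}(m))_{m\geq 0}$: for $\alpha \in \mathbb{N}$ this sequence has finite support and the bound is trivial, while for non-integer $\alpha > 0$ the classical asymptotic $|k^{-\alpha}(m)| \asymp m^{-\alpha-1}$ (from Stirling together with the Gamma reflection formula) makes the series converge. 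Granted this, $\|T^n\| = O(k^{\alpha+1}(n)) = O(n^\alpha)$, and $r(T) \leq 1$ follows. The main obstacle is thus the asymptotic estimate on $|k^{-\alpha}(m)|$; all the remaining content is formal algebra in the convolution algebra of sequences on $\mathbb{N}_0$.
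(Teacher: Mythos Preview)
Your argument is correct. The inversion $T^n=(k^{-\alpha}\ast\Delta^{-\alpha}\mathcal{T})(n)$ together with the absolute summability $\sum_{m\ge 0}|k^{-\alpha}(m)|<\infty$ gives $\|T^n\|=O(k^{\alpha+1}(n))=O(n^{\alpha})$, and Gelfand's formula finishes; the asymptotic $|k^{-\alpha}(m)|\asymp m^{-\alpha-1}$ for non-integer $\alpha$ is exactly the extension of \eqref{double} to negative exponents that the paper records after Proposition~\ref{WeylSumProp}.

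The paper's own proof of this lemma takes a different, shorter path: it uses the monotonicity $\sup_n\|M_T^{[\alpha]+1}(n)\|\le\sup_n\|M_T^{\alpha}(n)\|$ (recalled in the introduction) to pass to the integer order $[\alpha]+1$ and then simply quotes \cite[Lemma~2.1]{Su-Ze13}. Your route is more self-contained---no external citation, no reduction to integer orders---and it produces the sharper quantitative byproduct $\|T^n\|=O(n^{\alpha})$. In fact the paper later carries out essentially the same computation in Remark~3.3 (the identity $T^j=(k^{-\alpha}\ast\Delta^{-\alpha}\mathcal{T})(j)$ and the bound $\sum_n|k^{-\alpha}(n)|\le C_\alpha$) to obtain precisely that growth estimate, so your argument anticipates that remark and could replace both the lemma's proof and part of Remark~3.3 in one stroke.
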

\begin{proof}
The proof is a straightforward consequence of \cite[Lemma 2.1]{Su-Ze13} since $T$ is $(C,[\alpha]+1)$-bounded.
\end{proof}

The study of mean ergodic theorems for operators which are not power-bounded started with \cite{Hille}. There are many results concerning ergodicity (\cite{De00, Ed04, E2, Su-Ze13, To-Ze, Yo98}) and about the growth (\cite{LSS, Sa}) of the Ces\`aro sums and of the Ces\`aro mean of order $\alpha.$

In a recent paper \cite{Abadias}, it is proved that the algebraic structure of the Ces\`aro sum of order $\alpha$ of a bounded operator is similar to the algebraic structure of an $\alpha$-times integrated semigroups (\cite[Theorem 3.3]{Abadias}). In \cite[Section 2]{Abadias}, we construct certain weighted convolution algebras. For any $\alpha>0,$ if we consider the weight $k^{\alpha+1},$ we denote these algebras by $\tau^{\alpha}(k^{\alpha+1}),$ which are contained in $\ell^1(\N_0).$ We have characterized the $(C,\alpha)$-boundedness by the existence of an algebra homomorphism from $\tau^{\alpha}(k^{\alpha+1})$ into $\mathcal{B}(X)$ (\cite[Corollary 3.7]{Abadias}).

The outline of this paper is as follows: In section 2 we use Weyl fractional differences to construct Banach algebras $\tau^{\alpha}(|n|^{\alpha})$ contained in $\ell^1(\Z)$ (Theorem \ref{algebras}). The techniques used are similar to those in \cite[Section 2]{Abadias}, and we follow the same steps as in the continuous case (\cite{GM}), adapting the proofs. In section 3 we define fractional Wiener algebras of periodic continuous functions $A^{\alpha}_+(\mathbb{T})$ and $A^{\alpha}(\mathbb{T})$ which are isometrically isomorphic via the Fourier transform to $\tau^{\alpha}(k^{\alpha+1})$ and $\tau^{\alpha}(|n|^{\alpha}),$ respectively. These algebras allow us to state the main theorem of this paper (see Theorem \ref{main}): let $\alpha> 0,$ $T\in\mathcal{B}(X)$ be a $(C,\alpha)$-bounded operator and $\mathfrak{f}\in A_+^{\alpha}(\mathbb{T})$ be of spectral synthesis in $A^{\alpha}(\mathbb{T})$ with respect to $\sigma(T)\cap \mathbb{T}.$ Then $$\displaystyle\lim_{n\to\infty}\lVert M_T^{\alpha}(n)\theta_{\alpha}(\widehat{\mathfrak{f}}) \rVert=0,$$ where $\theta_{\alpha}:\tau^{\alpha}(k^{\alpha+1})\to\mathcal{B}(X)$ is the bounded algebra homomorphism defined by $$\theta_{\alpha}(f)x:=\displaystyle\sum_{n=0}^{\infty}W_+^{\alpha}f(n)\Delta^{-\alpha} \mathcal{T}(n)x,\quad x\in X,\, f\in\tau^{\alpha}(k^{\alpha+1}),$$ and $W_+^{\alpha}f$ is the Weyl fractional difference of order $\alpha$ of $f,$ see \cite[Theorem 3.5]{Abadias}. Finally in section 4 we give two applications of ergodicity for $(C,\alpha)$-bounded operators (Theorem \ref{ergodicity1} and Theorem \ref{ergodicity}).\\

\noindent{\bf Notation.} We denote by $\ell^1(\Z)$ the set of complex sequences \mbox{$f:\Z\to \C$} such that \mbox{$\sum_{n=-\infty}^{\infty}| f(n)|<\infty,$} and $c_{0,0}(\Z)$ the set of complex sequences with finite support. It is well known that $\ell^1(\Z)$ is a Banach algebra with the usual (commutative and associative) convolution product $$(f*g)(n)=\displaystyle\sum_{j=-\infty}^{\infty} f(n-j)g(j),\quad n\in\Z.$$ The above is valid for sequences defined in $\N_0$ instead $\Z,$ and the corresponding convolution product is $$(f*g)(n)=\displaystyle\sum_{j=0}^{n} f(n-j)g(j),\quad n\in\N_0.$$ Moreover, if $f$ is a sequence defined in $\N_0,$ we can see it as a sequence defined in $\Z$ where $f(n)=0$ for $n<0.$

Throughout the paper, we use the variable constant convention, in which $C$ denotes a constant
which may not be the same from line to line. The constant is frequently written with subindexes
to emphasize that it depends on some parameters.

\section{Fractional differences and convolution Banach algebras}
\setcounter{theorem}{0}
\setcounter{equation}{0}

For $\alpha>0,$ the Ces\`aro kernel of order $\alpha,$ $(k^{\alpha}(n))_{n\in\N_0,}$  plays a key role in the main results of this paper. Many properties can be found in \cite[Vol. I, p.77]{Zygmund}.  We quote some of them below: the semigroup property, $k^{\alpha}*k^{\beta}=k^{\alpha+\beta}$ for $\alpha, \beta >0$; for $\alpha>0$,   \begin{equation}\label{double}
 k^{\alpha}(n)=\frac{n^{\alpha-1}}{\Gamma(\alpha)}(1+O({1\over n})), \qquad n\in \N, \end{equation}
(\cite[Vol. I, (1.18)]{Zygmund}); $k^\alpha$ is increasing (as a function of $n$) for $\alpha >1$, decreasing for $0<\alpha<1$ and $k^1(n)=1$ for $n\in \N$ (\cite[Chapter III, Theorem 1.17]{Zygmund}); $k^\alpha(n)\le k^\beta(n)$ for $\beta \ge \alpha>0$ and $n\in \N_0$; finally, for $\alpha>0,$ there exists $C_{\alpha}>0$ such that the following inequality holds,
\begin{equation}\label{eq2.5}
k^\alpha(2n)\le C_{\alpha} k^\alpha(n), \qquad n\in \N_0,
\end{equation}
(\cite[Lemma 2.1]{Abadias}).

As we mentioned in the introduction, for each number $\alpha>0$ there exists a convolution Banach algebra $\tau^{\alpha}(k^{\alpha+1}),$ which is contained in $\ell^1(\N_0)$ and they are continuously included in each other, that is, $$\tau^{\beta}(k^{\beta+1})\hookrightarrow\tau^{\alpha}(k^{\alpha+1})\hookrightarrow\ell^1(\N_0),\quad \beta>\alpha>0,$$ and $\tau^{0}(k^{1})\equiv \ell^1(\N_0),$ see \cite{Abadias}. Now we are interested in obtaining some similar spaces contained in $\ell^1(\Z).$ For convenience, we denote $\tau^{\alpha}(n^{\alpha}):=\tau^{\alpha}(k^{\alpha+1})$ for $\alpha>0.$

In the following, let $(f(n))_{n\in\Z}$ be a sequence of complex numbers. Some results in this section can be extended immediately to vector-valued sequences, that is, $f$ takes values in a complex Banach space $X.$ We consider the usual forward and backward difference operator, $\Delta f(n)=f(n+1)-f(n)$ and $\nabla f(n)=f(n)-f(n-1),$ for $n\in\Z,$ and the natural powers $$\Delta^m f(n)=\displaystyle\sum_{j=0}^m(-1)^{m-j}\binom{m}{j}f(n+j),\qquad n\in\Z,$$ and $$\nabla^m f(n)=\displaystyle\sum_{j=0}^m(-1)^{j}\binom{m}{j}f(n-j),\qquad n\in\Z,$$ for $m\in\N_0,$ see for example \cite[(2.1.1)]{Elaydi} for $\Delta^{m}$ (for $\nabla^m$ it is a simple check using $\Delta^m$). Observe that $\Delta^m,\nabla^m:c_{0,0}(\Z)\to c_{0,0}(\Z)$ for $m\in \N_0$.

For convenience and follow the same notation as in \cite{Abadias}, we write $W_+=-\Delta$ and $W_{-}=\nabla,$ $W_+^{m}=(-1)^{m}\Delta^m$ and $W_-^{m}=\nabla^m$ for $m\in \N$. The inverse operators of $W_+$ and $W_-$, and their powers in $c_{0,0}(\Z)$ are given by the following expressions,
$$W_+^{-m}f(n)=\sum_{j=n}^{\infty}k^{m}(j-n)f(j),\qquad n\in \Z,$$ and $$W_-^{-m}f(n)=\sum_{j=-\infty}^{n}k^{m}(n-j)f(j),\qquad n\in \Z$$ for $m\in\N,$ see for example \cite[p.307]{Gale} in the case of $W_+$ for sequences define in $\N_0.$

\begin{definition}\label{WeylDifference}{\rm Let $(f(n))_{n\in\Z}$ be a complex sequence and $\alpha>0.$ The  {\it Weyl sums} of order $\alpha$ of $f$ are given by $$W_{+}^{-\alpha}f(n):=\displaystyle\sum_{j=n}^{\infty} k^{\alpha}(j-n)f(j),\qquad n\in\Z,$$ and $$W_{-}^{-\alpha}f(n):=\displaystyle\sum_{j=-\infty}^{n} k^{\alpha}(n-j)f(j),\qquad n\in\Z,$$
whenever the sums make sense, and the {\it Weyl differences} by $$W_+^{\alpha}f(n):=W_+^m W_+^{-(m-\alpha)}f(n)=(-1)^{m}\Delta^m W_+^{-(m-\alpha)}f(n),\qquad n\in\Z,$$ and $$W_-^{\alpha}f(n):=W_-^m W_-^{-(m-\alpha)}f(n)=\nabla^m W_-^{-(m-\alpha)}f(n),\qquad n\in\Z,$$ for $m=[\alpha]+1,$  whenever the right hand sides converge. In particular $W_+^{\alpha},\,W_-^{\alpha}: c_{0,0}(\Z)\to c_{0,0}(\Z)$ for $\alpha \in \R$. }
\end{definition}

The above definitions have been considered in more restrictive contexts in some papers (\cite{Abadias,Gale}). The natural properties that are satisfied in those contexts are generalized below, and the proof is similar to the proof of \cite[Proposition 2.4]{Abadias}.

\begin{proposition}\label{WeylSumProp} Let $f\in c_{0,0}(\Z)$ and $\alpha,\beta\in\R,$ then the following statements hold:\begin{itemize}
\item[(i)] $W_+^{\alpha+\beta}f=W_+^{\alpha}W_+^{\beta}f.$
\item[(ii)] $W_-^{\alpha+\beta}f=W_-^{\alpha}W_-^{\beta}f.$
\item[(iii)] $\displaystyle\lim_{\alpha\to 0}W_+^{\alpha}f=\displaystyle\lim_{\alpha\to 0}W_-^{\alpha}f=f.$
\end{itemize}
\end{proposition}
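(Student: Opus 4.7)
The plan is to reduce (i) and (ii) to the single semigroup identity for \emph{Weyl sums}, namely
\begin{equation*}
W_+^{-\alpha}W_+^{-\beta}f = W_+^{-(\alpha+\beta)}f, \qquad \alpha,\beta>0,
\end{equation*}
which in turn is an immediate consequence of Fubini (legal because $f\in c_{0,0}(\Z)$ makes every inner sum finite) together with the convolution identity $k^{\alpha}\ast k^{\beta}=k^{\alpha+\beta}$ recalled at the start of Section~2. The corresponding identity for $W_-$ follows by the symmetry $n\mapsto -n$. Before using this to chain Weyl differences together, I would check the consistency of the definition $W_+^{\alpha}f=(-1)^{m}\Delta^{m}W_+^{-(m-\alpha)}f$ in the choice of $m>\alpha$: if $m'>m$, then writing $m'=m+(m'-m)$, applying the semigroup identity inside, and using the elementary fact $(-\Delta)^{k}W_+^{-k}f=f$ on $c_{0,0}(\Z)$ for $k\in\N$ (a telescoping check), one sees the two factorisations agree.

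For (i) I would then split into sign cases on $\alpha$ and $\beta$. If both are $\leq 0$, the claim is the semigroup identity above. If both are positive with $m=[\alpha]+1,\,n=[\beta]+1$, write
\begin{equation*}
W_+^{\alpha}W_+^{\beta}f=(-1)^{m}\Delta^{m}W_+^{-(m-\alpha)}(-1)^{n}\Delta^{n}W_+^{-(n-\beta)}f.
\end{equation*}
On $c_{0,0}(\Z)$, the operators $\Delta^{n}$ and $W_+^{-(m-\alpha)}$ commute by a direct Fubini, so the right-hand side collapses to $(-1)^{m+n}\Delta^{m+n}W_+^{-((m+n)-(\alpha+\beta))}f$, which by the consistency lemma equals $W_+^{\alpha+\beta}f$. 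The mixed-sign case $\alpha>0>\beta$ uses the same recipe: inside $W_+^{\alpha}W_+^{-|\beta|}$ one absorbs $W_+^{-|\beta|}$ into $W_+^{-(m-\alpha)}$ via the semigroup identity and then invokes the consistency lemma. Part (ii) is the mirror image with $W_-$.

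For (iii), for $\alpha=-\varepsilon<0$ close to $0$, the finite sum $W_+^{-\varepsilon}f(n)=\sum_{j\geq n}k^{\varepsilon}(j-n)f(j)$ converges termwise to $f(n)$ because $k^{\varepsilon}(0)=1$ and $k^{\varepsilon}(j)=\Gamma(\varepsilon+j)/(\Gamma(\varepsilon)\Gamma(j+1))\to 0$ as $\varepsilon\to 0^{+}$ for $j\geq 1$, the factor $1/\Gamma(\varepsilon)$ doing the work. For $\alpha=\varepsilon>0$ small, $m=1$ and $W_+^{\varepsilon}f(n)=-\Delta W_+^{-(1-\varepsilon)}f(n)$; since $k^{1-\varepsilon}(j)\to k^{1}(j)=1$ pointwise and the sum defining $W_+^{-(1-\varepsilon)}f$ is supported on a set depending only on $\mathrm{supp}\,f$, dominated convergence gives $W_+^{-(1-\varepsilon)}f\to W_+^{-1}f$, and then $-\Delta W_+^{-1}f=f$ by the integer-order inversion. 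The $W_-$ assertion is again symmetric.

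The only delicate point I foresee is bookkeeping the commutation of $\Delta^{m}$ with $W_+^{-\gamma}$ and the order in which one collapses sums; everything else is pure algebra driven by $k^{\alpha}\ast k^{\beta}=k^{\alpha+\beta}$. The restriction to $c_{0,0}(\Z)$ removes all convergence worries, so no new analytic input beyond the kernel identities and a one-line dominated-convergence argument for (iii) is needed.
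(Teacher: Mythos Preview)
Your proposal is correct and follows the standard route that the paper has in mind: the paper does not spell out a proof but simply refers to \cite[Proposition~2.4]{Abadias}, whose argument is precisely the one you outline---use Fubini and $k^{\alpha}\ast k^{\beta}=k^{\alpha+\beta}$ to get the semigroup law for Weyl sums, check well-definedness in the choice of $m$, commute $\Delta^{m}$ past $W_+^{-\gamma}$, and handle (iii) by the pointwise limits of $k^{\varepsilon}$. Nothing is missing; your case split and the commutation step are exactly the bookkeeping required.
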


Note that the Ces\`aro kernel can be considered in a more general setting. For $\alpha\in\R,$ $$k^{\alpha}(n)=\frac{\alpha(\alpha+1)\cdots (\alpha+n-1)}{n!},\ \text{ for }n\in\N,\ k^{\alpha}(0)=1.$$ Also in the particular case that $\alpha<0$ with $\alpha\neq\{0,-1,-2,\ldots\}$ we can write $k^{\alpha}(n)=(-1)^n\binom{-\alpha}{n}$ and \eqref{double} is valid too. It is known that $$\displaystyle\sum_{n=0}^{\infty}k^{\alpha}(n)z^n=(1-z)^{-\alpha},\quad |z|<1.$$ Then we deduce that $k^{\alpha}*k^{\beta}=k^{\beta+\alpha}$ for $\alpha,\beta\in\R.$ This allows to represent the Weyl differences in the following way.

\begin{proposition}\label{other} Let $(f(n))_{n\in\Z}$ be a complex sequence and $\alpha \in\R.$ Then $$W_+^{\alpha}f(n)=\displaystyle\sum_{j=n}^{\infty} k^{-\alpha}(j-n)f(j),\quad W_-^{\alpha}f(n)=\displaystyle\sum_{j=-\infty}^{n} k^{-\alpha}(n-j)f(j),\qquad n\in\Z,$$ whenever the Weyl differences of order $\alpha$ of $f$ makes sense.
\end{proposition}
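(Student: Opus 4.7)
The plan is to unfold the definition $W_+^\alpha f(n) = W_+^m W_+^{-(m-\alpha)} f(n)$ with $m = [\alpha]+1$, substitute the explicit formulas for each of the two factors, interchange the resulting sums, and recognize the outcome as a discrete convolution via the identity $k^{-m} * k^{m-\alpha} = k^{-\alpha}$ noted just before the proposition. I will carry out the details for $W_+^\alpha$ with $\alpha > 0$; the case $\alpha \le 0$ is immediate from Definition \ref{WeylDifference} together with $k^0(0)=1$, and the formula for $W_-^\alpha$ is obtained by the obvious symmetric argument with the backward shift replacing the forward one.

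First I would rewrite
\begin{equation*}
W_+^m g(n) = (-1)^m \Delta^m g(n) = \sum_{j=0}^{m} (-1)^j \binom{m}{j} g(n+j),
\end{equation*}
and observe, from the factorial form $k^{\beta}(p) = \beta(\beta+1)\cdots(\beta+p-1)/p!$, that $(-1)^j \binom{m}{j} = k^{-m}(j)$ for $0 \le j \le m$, while $k^{-m}(j) = 0$ for $j > m$ because the product then contains the factor $0$. Applying this with $g = W_+^{-(m-\alpha)} f$ and using the explicit expression $W_+^{-(m-\alpha)} f(r) = \sum_{\ell \ge 0} k^{m-\alpha}(\ell) f(r+\ell)$, a change of variable $p = j+\ell$ yields
\begin{equation*}
W_+^\alpha f(n) = \sum_{p=0}^{\infty} \Bigl(\sum_{j=0}^{p} k^{-m}(j)\, k^{m-\alpha}(p-j)\Bigr) f(n+p) = \sum_{p=0}^{\infty} k^{-\alpha}(p) f(n+p),
\end{equation*}
since the inner bracket is exactly $(k^{-m} * k^{m-\alpha})(p) = k^{-\alpha}(p)$. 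Reindexing $j = n+p$ then produces the claimed formula.

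I expect the only delicate point to be the Fubini-type interchange of the outer (finite) and inner (infinite) sums. Because $j$ ranges over a finite set, it is enough that each inner series $\sum_\ell k^{m-\alpha}(\ell)|f(n+j+\ell)|$ converges, and this absolute convergence is part of the standing hypothesis that the right-hand side of the defining identity for $W_+^\alpha f$ makes sense. Once this interchange is justified, what remains is a purely algebraic manipulation resting on the convolution property $k^a * k^b = k^{a+b}$ for Ces\`aro kernels with real indices, which the paragraph preceding the proposition has already established.
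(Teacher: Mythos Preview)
Your argument is correct and follows essentially the same route as the paper's proof: expand $W_+^m$ and $W_+^{-(m-\alpha)}$, interchange the (finite) sum over $j$ with the series, and recognise the inner sum as the convolution $(k^{-m}*k^{m-\alpha})(p)=k^{-\alpha}(p)$. The only cosmetic differences are that the paper treats integer $\alpha$ separately and splits the range of the summation index explicitly when swapping sums, whereas you handle all $\alpha>0$ at once via $m=[\alpha]+1$ and absorb the split into the observation $k^{-m}(j)=0$ for $j>m$; both are equivalent presentations of the same computation.
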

\begin{proof}
We only prove the result for $W_+.$ The proof is analogous for $W_-.$ If $\alpha\in\N,$ then $$W_+^{\alpha}f(n)=\displaystyle\sum_{j=0}^{\alpha}(-1)^j\binom{\alpha}{j}f(n+j)=\displaystyle\sum_{j=0}^{\infty}k^{-\alpha}(j)f(n+j)=\displaystyle\sum_{j=n}^{\infty}k^{-\alpha}(n-j)f(j).$$ Now let $m-1<\alpha<m$ with $m\in\N.$ Then \begin{eqnarray*}
W^{\alpha}_+f(n)&=&W_+^mW_+^{-(m-\alpha)}f(n)=\displaystyle\sum_{j=0}^{m}(-1)^j\binom{m}{j}\displaystyle\sum_{l=n+j}^{\infty}k^{m-\alpha}(l-n-j)f(l)\\
&=&\displaystyle\sum_{l=n}^{n+m}f(l)\displaystyle\sum_{j=0}^{l-n}(-1)^j\binom{m}{j}k^{m-\alpha}(l-n-j)\\
&&+\displaystyle\sum_{l=n+m+1}^{\infty}f(l)\displaystyle\sum_{j=0}^{m}(-1)^j\binom{m}{j}k^{m-\alpha}(l-n-j)\\
&=&\displaystyle\sum_{l=n}^{n+m}f(l)(k^{-m}*k^{m-\alpha})(l-n)\\
&&+\displaystyle\sum_{l=n+m+1}^{\infty}f(l)(k^{-m}*k^{m-\alpha})(l-n)\\
&=&\displaystyle\sum_{l=n}^{\infty}k^{-\alpha}(l-n)f(l).
\end{eqnarray*}
\end{proof}

\begin{remark}{\rm The operators $W_+^{\alpha}$ and $W_+^{-\alpha}$ for $\alpha\in (0,1)$ are tightly connected to the definition of $(I-T)^{\alpha}$ for any contraction $T$ in a Banach space given in \cite{DL}. If we denote by $S$ the shift operator on $\ell^1(\Z)$, that is, $Sf(n)=f(n+1)$ for $n\in\Z,$ then $W_+^{\alpha}=(I-S)^{\alpha}$ (compatible with $I-S=-\Delta$), well-defined on the whole space $\ell^1(\Z),$ and $W_+^{-\alpha}=[(I-S)^{\alpha}]^{-1},$ defined on the range of $(I-S)^{\alpha}.$ Also these identities are valid for $\alpha>0.$ The author is studying these fractional powers of the operator $I-S$ as fractional powers of the generator of a uniformly bounded $C_0$-semigroup on a Banach space, which will appear in a forthcoming paper.
}
\end{remark}

\begin{remark}{\rm  Note that $W_+^mf(n)=\displaystyle\sum_{j=0}^m(-1)^{j}\binom{m}{j}f(n+j)$ and $W_-^mf(n)=\displaystyle\sum_{j=0}^m(-1)^{j}\binom{m}{j}f(n-j)$ for $m\in\N$ and $n\in\Z,$ therefore in general $W_+^{\alpha}f(n)\neq W_-^{\alpha}f(n)$ for $\alpha>0$ and $n\in\Z$ (it suffices take $0<\alpha<1$ and the sequence given by $f(n)=1$ for $n=0,1,$ and $f(n)=0$ in otherwise). However we have the following link between $W_+^{\alpha}$ and $W_-^{\alpha}.$ The proof is left to the reader.
}
\end{remark}

\begin{proposition} Let $\alpha$ be a positive real number and $f\in c_{0,0}(\Z)$ such that $f(n)=f(-n)$ for all $n\in\Z.$ Then the equality $$W_+^{\alpha}f(n)=W_-^{\alpha}f(-n),\quad n\in\Z,$$ holds. In particular $W_+^{\alpha}f(0)=W_-^{\alpha}f(0).$
\end{proposition}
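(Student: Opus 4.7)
The plan is to reduce the identity directly to the integral (here, summation) representation furnished by Proposition \ref{other}, and then exploit the evenness of $f$ via a simple change of summation index.

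First I would write, using Proposition \ref{other},
\[
W_+^{\alpha}f(n)=\sum_{j=n}^{\infty} k^{-\alpha}(j-n)\,f(j),\qquad W_-^{\alpha}f(-n)=\sum_{j=-\infty}^{-n} k^{-\alpha}(-n-j)\,f(j).
\]
Note both sums are finite since $f\in c_{0,0}(\Z)$, so convergence is not an issue. Next I would perform the change of index $j=-l$ in the second sum, which turns the range $j\le -n$ into $l\ge n$ and rewrites the summand as $k^{-\alpha}(l-n)\,f(-l)$. Finally, using the hypothesis $f(-l)=f(l)$, the right-hand side becomes exactly $\sum_{l=n}^{\infty} k^{-\alpha}(l-n)f(l)=W_+^{\alpha}f(n)$, which is the desired identity. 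The statement $W_+^{\alpha}f(0)=W_-^{\alpha}f(0)$ is the special case $n=0$.

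There is essentially no obstacle: the only delicate point is justifying the representation of $W_\pm^{\alpha}$ by the $k^{-\alpha}$-convolutions for arbitrary $\alpha>0$, but this is precisely the content of Proposition \ref{other}, which has already been established for sequences in $c_{0,0}(\Z)$. Alternatively, one could carry out the proof from the original definition by first checking the integer case (where it follows from the symmetry of the binomial coefficients $\binom{m}{j}$ together with the reindexing $j\mapsto m-j$) and then bootstrapping through $W_{\pm}^{\alpha}=W_{\pm}^{m}W_{\pm}^{-(m-\alpha)}$, but the route via Proposition \ref{other} is much shorter and the natural one to take here.
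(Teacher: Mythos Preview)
Your argument is correct. The paper does not actually supply a proof of this proposition---the remark immediately preceding it states ``The proof is left to the reader''---so there is nothing to compare against; your route via Proposition~\ref{other} together with the reindexing $j\mapsto -l$ and the evenness hypothesis is exactly the sort of short verification the author has in mind, and all steps are justified since $f\in c_{0,0}(\Z)$ makes every sum finite.
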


Let $(f(n))_{n\in\Z}$ be a complex sequence, we denote by $(f_+(n))_{n\in\Z},$ $(f_-(n))_{n\in\Z}$ and $(\tilde{f}(n))_{n\in\Z}$ the sequences given by \begin{displaymath}
f_+(n):=\left\{\begin{array}{ll}
f(n),& n\geq 0, \\
0,& n<0 ,
\end{array} \right.
\end{displaymath}

\begin{displaymath}
f_-(n):=\left\{\begin{array}{ll}
0,& n\geq 0, \\
f(n),& n<0,
\end{array} \right.
\end{displaymath}
and $\tilde{f}(n)=f(-n)$ for $n\in\Z.$ It is a simple check that $(W_+^{-\alpha}f)\tilde(n)=W_-^{-\alpha}\tilde{f}(n),$  $n\in\Z,$ for $\alpha>0$ and $f\in c_{00}(\Z).$ Then the following result is a straight consequence.
\begin{proposition}\label{prop2.5} Let $f\in c_{0,0}(\Z)$ and $\alpha>0,$ then the following assertions hold:\begin{itemize}
\item[(i)] $W_+^{\alpha}f_+(n)=W_+^{\alpha}f(n),\quad n\geq 0.$
\item[(ii)] $W_-^{\alpha}f_-(n)=W_-^{\alpha}f(n),\quad n<0.$
\item[(iii)] $(W_+^{\alpha}f)\tilde(n)=W_-^{\alpha}\tilde{f}(n),\quad n\in\Z.$
\end{itemize}
\end{proposition}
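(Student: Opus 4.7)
The plan is to reduce all three assertions to the single-sum representations of the Weyl differences provided by Proposition \ref{other}: for $\alpha\in\R$ and $f\in c_{0,0}(\Z)$,
\[
W_+^{\alpha} f(n) = \sum_{j=n}^{\infty} k^{-\alpha}(j-n) f(j), \qquad W_-^{\alpha} f(n) = \sum_{j=-\infty}^{n} k^{-\alpha}(n-j) f(j),
\]
where both series in fact have finite support and so certainly converge.

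For (i), if $n\geq 0$ then every index $j\geq n$ appearing in the sum for $W_+^{\alpha} f(n)$ is nonnegative, so $f(j)=f_+(j)$ and $W_+^{\alpha} f(n)=W_+^{\alpha} f_+(n)$. Part (ii) is entirely symmetric: for $n<0$, every $j\leq n$ in the sum for $W_-^{\alpha} f(n)$ is strictly negative, so $f(j)=f_-(j)$ and $W_-^{\alpha} f(n)=W_-^{\alpha} f_-(n)$.

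For (iii), I would perform the change of variable $l=-j$ directly in the series expression:
\[
(W_+^{\alpha} f)^{\sim}(n) = W_+^{\alpha} f(-n) = \sum_{j=-n}^{\infty} k^{-\alpha}(j+n) f(j) = \sum_{l=-\infty}^{n} k^{-\alpha}(n-l) \tilde{f}(l) = W_-^{\alpha} \tilde{f}(n).
\]
This matches the hint preceding the statement: the analogous substitution applied to $W_+^{-\alpha}$ yields $(W_+^{-\alpha} f)^{\sim}=W_-^{-\alpha}\tilde{f}$, and combining this with the elementary reindexing identity $(\Delta^{m} g)^{\sim}=(-1)^{m}\nabla^{m}\tilde{g}$ (proved by sending $j\mapsto m-j$ in the defining finite sum) and with the definitions $W_+^{\alpha}=(-1)^{m}\Delta^{m} W_+^{-(m-\alpha)}$, $W_-^{\alpha}=\nabla^{m} W_-^{-(m-\alpha)}$ for $m=[\alpha]+1$ gives (iii) directly from Definition \ref{WeylDifference}, without invoking Proposition \ref{other}.

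There is essentially no obstacle beyond careful index bookkeeping; once the Weyl difference is written as a single series, each of (i), (ii), (iii) is immediate on inspecting which values of $f$ are actually involved. The finite support of $f$ removes any convergence concerns.
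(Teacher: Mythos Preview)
Your proof is correct. The paper itself does not give a detailed argument: it only remarks before the proposition that $(W_+^{-\alpha}f)\tilde{}=W_-^{-\alpha}\tilde f$ is a simple check and that the proposition is a ``straight consequence'' of this. That route---first proving the tilde identity for the Weyl \emph{sums} and then combining it with the finite-difference identity $(\Delta^m g)\tilde{}=(-1)^m\nabla^m\tilde g$ and Definition~\ref{WeylDifference}---is exactly the alternative you sketch at the end of your treatment of (iii). Your primary argument, by contrast, bypasses this two-step reduction entirely by invoking Proposition~\ref{other} to write $W_\pm^\alpha$ as a single convolution with $k^{-\alpha}$; then (i), (ii), and (iii) all become immediate index observations. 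This is a mild but genuine simplification: once Proposition~\ref{other} is available, there is no need to unwind the composite $W_\pm^m W_\pm^{-(m-\alpha)}$ at all. Either way the content is elementary bookkeeping, and your write-up is complete.
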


\begin{definition}{\rm Let $\alpha>0.$ We denote by $W^{\alpha}:c_{0,0}(\Z)\to c_{0,0}(\Z)$ the operator given by \begin{displaymath}
W^{\alpha}f(n):=\left\{\begin{array}{ll}
W_+^{\alpha}f(n),& n\geq 0, \\ \\
W_-^{\alpha}f(n),& n<0 ,
\end{array} \right.
\end{displaymath}
for $f\in c_{0,0}(\Z).$}
\end{definition}

We are interested in the relation between the convolution product and the fractional Weyl differences. If $f,g\in c_{0,0}(\Z)$ then it is known that $f*g\in c_{0,0}(\Z).$ In \cite[Lemma 2.7]{Abadias}, the following equality is proved: \begin{equation}\label{eq2.7}\begin{array}{rcl}
W_+^{\alpha}(f_+*g_+)(n)&=&\displaystyle\sum_{j=0}^n W_+^{\alpha}g(j)\displaystyle\sum_{p=n-j}^n k^{\alpha}(p-n+j)W_+^{\alpha}f(p) \\
&&-\displaystyle\sum_{j=n+1}^{\infty} W_+^{\alpha}g(j)\displaystyle\sum_{p=n+1}^{\infty} k^{\alpha}(p-n+j)W_+^{\alpha}f(p),\quad n\geq 0,
\end{array}
\end{equation}
for $f,g\in c_{0,0}(\Z)$ and $\alpha\geq 0.$ The rest of this section is inspired by the continuous case, see \cite{GM}.

\begin{lemma}\label{lemma2.8} Let $f, g\in c_{0,0}(\Z)$ and $\alpha>0,$ then \begin{itemize}
\item[(i)] $W_+^{\alpha}(f_+*g_-)(n)=(W_+^{\alpha}f_+*g_-)(n),\quad n\geq 0.$
\item[(ii)] $W_-^{\alpha}(f_-*g_+)(n)=(W_-^{\alpha}f_-*g_+)(n),\quad n< 0.$
\end{itemize}
\end{lemma}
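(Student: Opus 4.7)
The plan is to verify both identities by direct computation, using the series representation of the Weyl differences furnished by Proposition~\ref{other}, namely
$$W_+^{\alpha}h(n)=\sum_{j=n}^{\infty}k^{-\alpha}(j-n)h(j),\qquad W_-^{\alpha}h(n)=\sum_{j=-\infty}^{n}k^{-\alpha}(n-j)h(j).$$
Since $f,g\in c_{0,0}(\Z)$, every sum that appears below is finitely supported, so any exchange of summations and any change of index is automatically legitimate without a separate convergence check.

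For (i), with $n\geq 0$ I first expand
$$(f_+*g_-)(m)=\sum_{j=1}^{\infty}f(m+j)\,g(-j),\qquad m\geq 0,$$
using that $g_-$ is supported on $\{j<0\}$ and that $m-j>0$ automatically. Substituting into the series $W_+^{\alpha}(f_+*g_-)(n)=\sum_{m=n}^{\infty}k^{-\alpha}(m-n)(f_+*g_-)(m)$ produces a double sum over $\{m\geq n,\ j\geq 1\}$. On the other side, the same proposition gives, for each $j\geq 1$,
$$(W_+^{\alpha}f_+)(n+j)=\sum_{\ell=n+j}^{\infty}k^{-\alpha}(\ell-n-j)\,f(\ell),$$
where the constraints $n\geq 0$ and $j\geq 1$ force $\ell\geq 1$, so $f_+(\ell)=f(\ell)$. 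Plugging this into $(W_+^{\alpha}f_+*g_-)(n)=\sum_{j=1}^{\infty}(W_+^{\alpha}f_+)(n+j)\,g(-j)$ and performing the change of variable $m=\ell-j$, so that $\ell\geq n+j$ becomes $m\geq n$, recovers exactly the same double sum as the one obtained for the left-hand side.

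Part (ii) is established by the entirely parallel computation: with $n<0$, expand $(f_-*g_+)(m)=\sum_{j=0}^{\infty}f(m-j)\,g(j)$ for $m<0$, apply the $W_-^{\alpha}$-series, use
$$(W_-^{\alpha}f_-)(n-j)=\sum_{\ell=-\infty}^{n-j}k^{-\alpha}(n-j-\ell)\,f(\ell)$$
(valid because $n-j<0$ forces $\ell<0$, hence $f_-(\ell)=f(\ell)$), and reconcile the two resulting double sums via the substitution $m=\ell+j$. Alternatively, (ii) follows from (i) by reflection through the involution $h\mapsto\tilde h$ together with Proposition~\ref{prop2.5}(iii). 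The only delicate point is index bookkeeping---one must verify that every occurrence of $f_\pm$ sits in the region where it coincides with $f$---but the sign hypothesis on $n$ in each case arranges this automatically, so there is no genuine obstacle.
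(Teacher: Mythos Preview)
Your proof is correct and follows essentially the same strategy as the paper's: a direct double-sum manipulation using Fubini and a change of variable, with part (ii) reducible to (i) by the reflection $h\mapsto\tilde h$ and Proposition~\ref{prop2.5}. The only difference in execution is that you invoke Proposition~\ref{other} to expand $W_+^{\alpha}$ via the kernel $k^{-\alpha}$ and compare both sides directly, whereas the paper instead writes $f_+=W_+^{-\alpha}W_+^{\alpha}f_+$, shows $(f_+*g_-)(n)=W_+^{-\alpha}(W_+^{\alpha}f_+*g_-)(n)$ for $n\geq 0$ using the positive kernel $k^{\alpha}$, and then applies $W_+^{\alpha}$; this avoids Proposition~\ref{other} at the cost of an extra inversion step, but the computations are otherwise parallel. (A trivial typo: your phrase ``$m-j>0$ automatically'' should read ``$m+j>0$''.)
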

\begin{proof} (i) Let $n\geq 0,$ then \begin{eqnarray*}
(f_+*g_-)(n)&=&\displaystyle\sum_{j=n+1}^{\infty}W_{+}^{-\alpha}W_{+}^{\alpha}f_+(j)g_-(n-j)\\
&=&\displaystyle\sum_{j=n+1}^{\infty}W^{\alpha}_+f_+(j)\sum_{i=n+1}^jk^{\alpha}(j-i)g_-(n-i) \\&=&\displaystyle\sum_{j=n+1}^{\infty}W^{\alpha}_+f_+(j)\sum_{u=n}^{j-1}k^{\alpha}(u-n)g_-(u-j)\\
&=&\sum_{u=n}^{\infty}k^{\alpha}(u-n)\displaystyle\sum_{j=u+1}^{\infty}W^{\alpha}_+f_+(j)g_-(u-j)\\
&=& W_+^{-\alpha}(W_+^{\alpha}f_+*g_-)(n),
\end{eqnarray*} where we have used Fubini's Theorem and a change of variables, and then $W_+^{\alpha}(f_+*g_-)(n)=W_+^{\alpha}f_+*g_-(n).$ (ii) Using Proposition \ref{prop2.5} and the part (i) we get for $n<0$ that \begin{eqnarray*}
W_-^{\alpha}(f_-*g_+)(n)&=&W_+^{\alpha}(f_-*g_+)\tilde\,(-n)=W_+^{\alpha}((f_-)\tilde\,*(g_+)\tilde\,)(-n)\\ \\
&=&W_+^{\alpha}(\tilde{f}_+*\tilde{g}_-)(-n)=(W_+^{\alpha}\tilde{f}_+*\tilde{g}_-)(-n)\\ \\
&=&( (W_+^{\alpha}\tilde{f}_+)\tilde\,*(\tilde{g}_-)\tilde\, )(n)=(W_-^{\alpha}f_-*g_+)(n).
\end{eqnarray*}
\end{proof}


\begin{lemma}\label{lemma2.9} Let $f, g\in c_{0,0}(\Z)$ and $\alpha>0,$ then $$
W^{\alpha}(f*g)(n)=(W_+^{\alpha}f_+*g_-)(n) + W_+^{\alpha}(f_+*g_+)(n) + (f_-*W_+^{\alpha}g_+)(n),$$ for $n\geq 0,$ and $$
W^{\alpha}(f*g)(n)=(W_-^{\alpha}f_-*g_+)(n) + W_-^{\alpha}(f_-*g_-)(n)+  (f_+*W_-^{\alpha}g_-)(n),$$ for $n<0$.
\end{lemma}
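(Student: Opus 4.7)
The strategy is to split $f = f_+ + f_-$ and $g = g_+ + g_-$ and expand bilinearly
$$f*g \,=\, f_+*g_+ \,+\, f_+*g_- \,+\, f_-*g_+ \,+\, f_-*g_-,$$
then apply $W^{\alpha}$ piece by piece and match the four summands to the three terms on the right-hand side of the claim, in each of the two regimes $n \geq 0$ and $n < 0$.

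For $n \geq 0$ one has $W^{\alpha} = W_+^{\alpha}$ by definition. The piece $W_+^{\alpha}(f_+*g_+)(n)$ appears as is. For $W_+^{\alpha}(f_+*g_-)(n)$ I apply Lemma \ref{lemma2.8}(i) directly to obtain $(W_+^{\alpha}f_+ * g_-)(n)$. For $W_+^{\alpha}(f_-*g_+)(n)$ I first use commutativity of $*$ and then Lemma \ref{lemma2.8}(i) applied to $g_+*f_-$ (playing the roles of $f_+ * g_-$ in the statement of the lemma) to rewrite it as $(W_+^{\alpha}g_+ * f_-)(n) = (f_- * W_+^{\alpha}g_+)(n)$. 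The remaining piece $W_+^{\alpha}(f_-*g_-)(n)$ must therefore vanish for $n \geq 0$. This is the only substantive point of the proof: for every $j \geq 0$,
$$(f_- * g_-)(j) \,=\, \sum_{k\in\Z} f_-(j-k)\,g_-(k),$$
but $g_-(k)\neq 0$ forces $k<0$ while $f_-(j-k)\neq 0$ forces $j-k<0$, i.e., $k>j\geq 0$, so the index set is empty and $(f_-*g_-)(j)=0$ for all $j\geq 0$. Since $W_+^{\alpha}h(n) = (-1)^m\Delta^m W_+^{-(m-\alpha)}h(n)$ with $m=[\alpha]+1$ only depends on the values $h(j)$ with $j\geq n$, the vanishing of $f_-*g_-$ on $\N_0$ kills $W_+^{\alpha}(f_-*g_-)(n)$ at every $n\geq 0$.

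The case $n<0$ is the exact mirror image: now $W^{\alpha}=W_-^{\alpha}$, Lemma \ref{lemma2.8}(ii) disposes of the two cross terms $f_-*g_+$ and (after using commutativity) $f_+*g_-$, and the symmetric support argument shows $(f_+*g_+)(j)=0$ for $j<0$, whence $W_-^{\alpha}(f_+*g_+)(n)=0$ for $n<0$. Alternatively, one can reduce this case to the first by applying Proposition \ref{prop2.5}(iii) to $(f*g)\tilde{\ } = \tilde f * \tilde g$.

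There is no genuine obstacle beyond the vanishing observation for $f_-*g_-$ (resp.\ $f_+*g_+$) at non-negative (resp.\ negative) indices; the rest is linearity, commutativity of $*$, and Lemma \ref{lemma2.8}. The main care needed is the bookkeeping of which of the four pieces contributes to which of the three terms of the statement.
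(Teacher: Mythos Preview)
Your proof is correct and follows essentially the same approach as the paper: both split $f*g$ into the four bilinear pieces, observe that $(f_-*g_-)$ vanishes on $\N_0$ (respectively $(f_+*g_+)$ on the negatives), and then invoke Lemma~\ref{lemma2.8} on the cross terms. The paper compresses the vanishing observation into ``it is a simple check,'' whereas you spell it out; otherwise the arguments are identical.
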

\begin{proof}
It is a simple check that $$(f*g)(n)=(f_+*g_-)(n) + (f_+*g_+)(n) + (f_-*g_+)(n),\quad n\geq 0$$ and $$(f*g)(n)=(f_-*g_+)(n) + (f_-*g_-)(n)+ (f_+*g_-)(n),\quad n<0.$$ Then by Lemma \ref{lemma2.8} we get the result.
\end{proof}

For $\alpha\geq 0$ we define the application $q_{\alpha}:c_{0,0}(\Z)\to [0,\infty)$ given by $$q_{\alpha}(f):=\displaystyle\sum_{n=-\infty}^{\infty}k^{\alpha+1}(|n|)|W^{\alpha}f(n)|, \qquad f\in c_{0,0}(\Z).$$ Observe that for $\alpha=0$ the above application is the usual norm in $\ell^1(\Z)$.

The following theorem is the main one of this section, and it extends \cite[Theorem 2.11]{Abadias} and \cite[Theorem 4.5]{Gale}.

\begin{theorem}\label{algebras}\label{th3.1} Let $\alpha>0.$ The application $q_{\alpha}$ defines a norm in $c_{0,0}(\Z)$ and  $$q_{\alpha}(f*g)\leq C_{\alpha}\,q_{\alpha}(f)\,q_{\alpha}(g), \qquad f,g\in c_{0,0}(\Z),$$ with $C_{\alpha}>0$ independent of $f$ and $g.$ We denote by $\tau^{\alpha}(|n|^{\alpha})$ the Banach algebra obtained as the space of complex sequences $f$ such that $\lim_{n\to\infty} f(n)=0$ and the norm $q_{\alpha}(f)$ converges. Furthermore, these spaces are continuously embedding each in other in the following way
$$\tau^{\beta}(|n|^{\beta})\hookrightarrow\tau^{\alpha}(|n|^{\alpha})\hookrightarrow \ell^1(\Z),$$ for $\beta>\alpha>0,$ and $\lim_{\alpha \to 0^+}q_\alpha(f)=\Vert f\Vert_1$, for $f\in c_{0,0}(\Z)$.
\end{theorem}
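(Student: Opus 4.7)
The plan is to follow the one-sided development of $\tau^\alpha(n^\alpha)$ in \cite[Section 2]{Abadias}, adapting it to the two-sided setting via the decomposition $f=f_++f_-$ and the corresponding splitting $W^\alpha|_{\N_0}=W_+^\alpha$, $W^\alpha|_{-\N}=W_-^\alpha$. That $q_\alpha$ is a norm on $c_{0,0}(\Z)$ reduces to positive definiteness: if $q_\alpha(f)=0$ then $W_+^\alpha f_+\equiv 0$ on $\N_0$ and $W_-^\alpha f_-\equiv 0$ on the negative integers, and since $W_\pm^{-\alpha}$ are two-sided inverses of $W_\pm^\alpha$ on $c_{0,0}(\Z)$ by Proposition \ref{WeylSumProp}, one concludes $f=0$.

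I would then prove the continuous embedding $\tau^\alpha(|n|^\alpha)\hookrightarrow\ell^1(\Z)$ with constant $1$ by writing
$$f_+(n)=W_+^{-\alpha}W_+^\alpha f(n)=\sum_{j\geq n}k^\alpha(j-n)\,W_+^\alpha f(j),\qquad n\geq 0,$$
summing in $n$ and applying Fubini; the inner sum collapses via $k^1*k^\alpha=k^{\alpha+1}$ to yield $\|f_+\|_1\leq\sum_{j\geq 0}k^{\alpha+1}(j)|W_+^\alpha f(j)|$, and a symmetric estimate handles $f_-$. The same computation applied to $W_+^\alpha f=W_+^{-(\beta-\alpha)}W_+^\beta f$, together with $k^{\alpha+1}*k^{\beta-\alpha}=k^{\beta+1}$, produces the scale of embeddings $\tau^\beta(|n|^\beta)\hookrightarrow\tau^\alpha(|n|^\alpha)$ for $\beta>\alpha>0$, again with constant $1$.

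The main step is submultiplicativity. I would apply Lemma \ref{lemma2.9} and treat $n\geq 0$ and $n<0$ separately. The diagonal pieces $W_+^\alpha(f_+*g_+)$ and (symmetrically) $W_-^\alpha(f_-*g_-)$ are controlled directly by the one-sided algebra result \cite[Theorem 2.11]{Abadias}, the negative side reducing to the positive one via Proposition \ref{prop2.5}(iii) and the involution $f\mapsto\tilde f$. For the cross terms, the typical one is
$$(W_+^\alpha f_+*g_-)(n)=\sum_{m>0}W_+^\alpha f_+(n+m)\,g(-m),\qquad n\geq 0;$$
multiplying by $k^{\alpha+1}(n)$, summing in $n$, swapping the order of summation and substituting $\ell=n+m$ reduces matters to the weighted tail sums $\sum_{\ell\geq m}k^{\alpha+1}(\ell-m)|W_+^\alpha f_+(\ell)|$. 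The key inequality is $k^{\alpha+1}(\ell-m)\leq k^{\alpha+1}(\ell)$, which holds since $\alpha+1>1$ makes $k^{\alpha+1}$ increasing; combined with the $\ell^1$-embedding proved above, this bounds the cross term by $\|g_-\|_1\,q_\alpha(f)\leq q_\alpha(g)\,q_\alpha(f)$, and the remaining three cross terms are handled identically.

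Completeness of $\tau^\alpha(|n|^\alpha)$ follows by a standard Cauchy-limit argument, using the $\ell^1$-embedding to extract a pointwise limit and the Fubini identities to pass $W^\alpha$ through it, while vanishing at infinity is inherited from the density of $c_{0,0}(\Z)$. For the limit $\lim_{\alpha\to 0^+}q_\alpha(f)=\|f\|_1$ on $c_{0,0}(\Z)$, the observation is that if $\operatorname{supp} f\subseteq[-N,N]$ then by Proposition \ref{other} the sequence $W^\alpha f$ is also supported in $[-N,N]$, so $q_\alpha(f)$ is a finite sum which one evaluates termwise using $W^\alpha f(n)\to f(n)$ (Proposition \ref{WeylSumProp}(iii)) and $k^{\alpha+1}(|n|)\to 1$. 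The one step I expect to be the main obstacle is the cross-term estimate; once the monotonicity $k^{\alpha+1}(\ell-m)\leq k^{\alpha+1}(\ell)$ is exploited to uncouple $\ell$ from $m$, the rest is a fairly mechanical two-sided adaptation of \cite[Theorem 2.11]{Abadias}.
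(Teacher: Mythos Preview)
Your proposal is correct and follows essentially the same route as the paper: split $q_\alpha$ into its positive and negative halves, apply Lemma \ref{lemma2.9} to decompose $W^\alpha(f*g)$, control the diagonal pieces via \cite[Theorem 2.11]{Abadias} (reducing the negative side to the positive via the involution of Proposition \ref{prop2.5}(iii)), and bound the cross terms using the monotonicity of $k^{\alpha+1}$ together with the $\ell^1$-embedding; the scale of embeddings is obtained in both cases from $W_+^{\alpha}=W_+^{-(\beta-\alpha)}W_+^{\beta}$ and the semigroup identity $k^{\alpha+1}*k^{\beta-\alpha}=k^{\beta+1}$. Your cross-term computation (substituting $\ell=n+m$ and using $k^{\alpha+1}(\ell-m)\le k^{\alpha+1}(\ell)$) is the same estimate as the paper's, just parametrized differently, and you supply explicit arguments for positive definiteness and for $\lim_{\alpha\to 0^+}q_\alpha(f)=\|f\|_1$ that the paper leaves implicit.
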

\begin{proof} It is clear that $q_{\alpha}$ is a norm in $c_{0,0}(\Z).$ We write \begin{eqnarray*}
q_{\alpha}(f)&=&\displaystyle\sum_{n=-\infty}^{-1}k^{\alpha+1}(-n)|W^{\alpha}_-f_-(n)|+\displaystyle\sum_{n=0}^{\infty}k^{\alpha+1}(n)|W^{\alpha}_+f_+(n)|\\
&:=&q_{\alpha}^-(f_-)+q_{\alpha}^+(f_+).\end{eqnarray*}
We have to show that $q_{\alpha}$ defines a Banach algebra. First we prove that $$q_{\alpha}^{+}((f*g)_+)\leq C_{\alpha}q_{\alpha}(f)q_{\alpha}(g).$$ By Lemma \ref{lemma2.9}, $$W^{\alpha}(f*g)(n)=(W_+^{\alpha}f_+*g_-)(n)+W_+^{\alpha}(f_+*g_+)(n)+(f_-*W_+^{\alpha}g_+)(n),$$ for $n\geq 0,$ then we work with each summand separately. The first, \begin{eqnarray*}
\displaystyle\sum_{n=0}^{\infty}k^{\alpha+1}(n)|(W_+^{\alpha}f_+*g_-)(n)|&\leq &\sum_{n=0}^{\infty}k^{\alpha+1}(n)\sum_{j=n+1}^{\infty}|W_+^{\alpha}f_+(j)||g_-(n-j)| \\
&=&\sum_{j=1}^{\infty}|W_+^{\alpha}f_+(j)|\sum_{n=0}^{j-1}k^{\alpha+1}(n)|g_-(n-j)| \\
&\leq&\sum_{j=1}^{\infty}|W_+^{\alpha}f_+(j)|k^{\alpha+1}(j)\sum_{u=-j}^{-1}|g_-(u)| \\
&\leq&q_{\alpha}^+(f_+)q_{\alpha}^-(g_-)\leq q_{\alpha}(f)q_{\alpha}(g),
\end{eqnarray*}
where we have used Fubini's Theorem, a change of variables and that $k^{\alpha+1}$ is increasing (as function of n) for $\alpha>0.$ The third is clear using the commutativity of the convolution and the bound of the first summand. The second is a consequence of Proposition \ref{prop2.5} (i) and \cite[Theorem 2.11]{Abadias}.

To finish we have to estimate $q_{\alpha}^{-}((f*g)_-).$ By Proposition \ref{prop2.5} (ii) we have for $n<0$ that $$W_-^{\alpha}(f*g)(n)=W_+^{\alpha}(f*g)\tilde\,(-n)=W_+^{\alpha}(\tilde{f}*\tilde{g})(-n)=W_+^{\alpha}((\tilde{f}*\tilde{g})_+)(-n),$$ then $$q_{\alpha}^-((f*g)_-)\leq \displaystyle\sum_{n=0}^{\infty}k^{\alpha+1}(n)|W_+^{\alpha}(\tilde{f}*\tilde{g})_+(n)|\leq C_{\alpha}q_{\alpha}(\tilde{f})q_{\alpha}(\tilde{g})=C_{\alpha}q_{\alpha}(f)q_{\alpha}(g).$$

Finally note that if $f\in \tau^{\beta}(|n|^{\beta}),$ then
 \begin{eqnarray*}
q_{\alpha}(f)&=&\displaystyle\sum_{n=-\infty}^{\infty}|W^{\alpha}f(n)|k^{\alpha+1}(n)=\displaystyle\sum_{n=0}^{\infty}k^{\alpha+1}(n)|\displaystyle\sum_{j=n}^{\infty}k^{\beta-\alpha}(j-n)W_+^{\beta}f(j)| \\
&&+\displaystyle\sum_{n=-\infty}^{-1}k^{\alpha+1}(-n)|\displaystyle\sum_{j=-\infty }^{n}k^{\beta-\alpha}(n-j)W_-^{\beta}f(j)|\\
&\leq&\displaystyle\sum_{j=0}^{\infty}|W_+^{\beta}f(j)|k^{\beta+1}(j)+\displaystyle\sum_{j=-\infty }^{-1}|W_-^{\beta}f(j)|k^{\beta+1}(-j)\\
&=&\displaystyle\sum_{j=-\infty}^{\infty}k^{\beta+1}(|j|)|W^{\beta}f(j)|=q_{\beta}(f),
\end{eqnarray*} where we have applied Proposition \ref{WeylSumProp} and the semigroup property of $k^{\alpha}.$
\end{proof}

\begin{remark} {\rm Note that by \eqref{double} the norm $q_{\alpha}$ is equivalent to the norm $\overline{q_{\alpha}}$ where \begin{eqnarray*}
\overline{q_{\alpha}}(f)&:=&\displaystyle\sum_{n=1}^{\infty}n^{\alpha}|W^{\alpha}_-f(-n)|+|f(0)|+\displaystyle\sum_{n=1}^{\infty}n^{\alpha}|W^{\alpha}_+f(n)| \\
&=&|f(0)|+\displaystyle\sum_{n=1}^{\infty}n^{\alpha}(|W^{\alpha}_+f(n)|+|W^{\alpha}_+\tilde{f}(n)|).
\end{eqnarray*}}

\end{remark}

\section{A Katznelson-Tzafriri type theorem for $(C,\alpha)$-bounded operators}

\setcounter{theorem}{0}
\setcounter{equation}{0}

For $\alpha> 0,$ we denote by $A^{\alpha}(\mathbb{T})$ a new Wiener algebra formed by all continuous periodic functions $\mathfrak{f}(t)=\sum_{n=-\infty}^{\infty}\widehat{\mathfrak{f}}(n) e^{int},$ for $t\in [0,2\pi],$ with the norm $$\lVert \mathfrak{f} \rVert_{A^{\alpha}(\mathbb{T})}:=\displaystyle\sum_{n=-\infty}^{\infty}|W^{\alpha}\widehat{\mathfrak{f}}(n)| k^{\alpha+1}(|n|)<\infty.$$ This algebra is regular since its character is equal to the character of $\ell^1(\Z)$, which is $\mathbb{T}.$ Similarly to the case $\alpha=0,$ we denote by $A_+^{\alpha}(\mathbb{T})$ the convolution closed subalgebra of $A^{\alpha}(\mathbb{T})$ where the coefficients $\widehat{\mathfrak{f}}(n)=0$ for $n<0.$ Note that $A^{\alpha}(\mathbb{T})$ and $\tau^{\alpha}(|n|^{\alpha})$ are isometrically isomorphic via Fourier coefficients. The same holds for $A_+^{\alpha}(\mathbb{T})$ and $\tau^{\alpha}(n^{\alpha}).$

\begin{remark}{\rm
Recall that in \cite{AOR} a version of the original Katznelson-Tzafriri Theorem is proved for periodic functions $\mathfrak{f}$ such that their Fourier coefficients satisfy $\sum_{j\geq 0}j|\widehat{\mathfrak{f}}(j)|<\infty.$ Note that $A_+^1(\T)$ includes these functions. Also, if $\mathfrak{f}\in A_+(\T)$ has monotonely decreasing Fourier coefficients then, by remark at the beginning of the proof of \cite[Theorem 4.1, Chapter 1]{Kat}  for the sequence $a_n=\sum_{j\geq n}\widehat{\mathfrak{f}}(j),$ $\mathfrak{f}\in A_+^1(\T).$ This class of functions is studied in \cite{Zygmund}.

More generally, the subalgebras $A_+^m(\T)$ for $m\in\N$ are larger than the Korenblyum subalgebras defined in \cite{Gale}. In fact, $\mathfrak{f}(t)=\sum_{n\geq 1}\frac{1}{n^{m+1}}e^{int}\in A_+^m(\T)$ and $\mathfrak{f}$ does not belong to the corresponding Korenblyum subalgebra.
}
\end{remark}

\begin{remark}{\rm
The spaces $A_+^{\alpha}(\T)$ are decreasing as $\alpha$ increases, and they are dense in $A_+(\T),$ since by \cite{Gale} those with integer $\alpha$ are. Furthermore we have the following:\begin{itemize}

\item[(i)] Note that the proof of \cite[Theorem 2.10 (iii)]{Abadias} proves that $\lVert \mathfrak{f}\rVert_{A_+^{\alpha}(\T)}\leq \lVert \mathfrak{f}\rVert_{A_+^{\beta}(\T)}$ for $0\leq \alpha <\beta,$ then these spaces are continuously embedding each in other with norm 1.

\item[(ii)] For $0\leq \alpha<\beta$ we have $A_+^{\beta}(\T)\subsetneq A_+^{\alpha}(\T).$ This is a consequence of the characterization of the $(C,\alpha)$-boundedness by means of homomorphisms defined on these spaces (\cite[Corollary 3.7]{Abadias}) and the existence of operators which are $(C,\beta)$-bounded but no $(C,\alpha)$-bounded (\cite[Propositions 4.3 and 4.4]{LSS}).

\item[(iii)] For $\alpha>0,$ the functions $\mathfrak{f}$ such that $\sum_{j\geq 0}j^{\alpha}|\widehat{\mathfrak{f}}(j)|<\infty$ are included in $A_+^{\alpha}(\T).$ In fact, note that there exits a sequence $c\in\ell^{\infty}(\N_0),$ such that $|\mathfrak{\widehat{f}}(n+j)|<c(n)|\mathfrak{\widehat{f}}(n)|$ for all $j\geq 0.$ By Proposition \ref{other} and \eqref{double} for $k^{-\alpha}$ we have $$\overline{q_{\alpha}}({\mathfrak{\widehat{f}}})\leq |\mathfrak{\widehat{f}}(0)|+\lVert c\rVert_{\infty}\sum_{j=0}^{\infty}|k^{-\alpha}(j)|\sum_{n=0}^{\infty}n^{\alpha}|\mathfrak{\widehat{f}}(n)|<\infty.$$
\end{itemize}
}
\end{remark}

Let $E$ be a closed subset of $\mathbb{T}$ and $ \mathfrak{f}\in A^{\alpha}(\mathbb{T}).$ We recall that $ \mathfrak{f}$ is of spectral synthesis with respect to $E$ if for every $\varepsilon>0$ there exists $ \mathfrak{f}_{\varepsilon}\in A^{\alpha}(\mathbb{T})$ such that $\lVert  \mathfrak{f}- \mathfrak{f}_{\varepsilon} \rVert_{A^{\alpha}(\mathbb{T})}<\varepsilon$ with $ \mathfrak{f}_{\varepsilon}= 0$ in a neighborhood of $E.$

Let $T\in \mathcal{B}(X)$ and $\alpha>0.$ We can write the $(C,\alpha)$-boundedness of $T$ in the following way: there exists a constant $C>0$ such that \begin{equation*}
\|\Delta^{-\alpha} \mathcal{T}(n) \| \leq C k^{\alpha+1}(n),\quad n\in\N_0.
\end{equation*}
Furthermore, we have cited in the introduction that for $\alpha> 0$ and $T\in\mathcal{B}(X)$ a $(C,\alpha)$-bounded operator, there exists a bounded algebra homomorphism $\theta_{\alpha}:\tau^{\alpha}(n^{\alpha})\to \mathcal{B}(X)$ given by $$\theta_{\alpha}(f)x=\displaystyle\sum_{n=0}^{\infty}W^{\alpha}_+f(n)\Delta^{-\alpha} \mathcal{T}(n)x,\qquad x\in X,\,f\in \tau^{\alpha}(n^{\alpha}),$$ see \cite[Theorem 3.5]{Abadias}.

\begin{remark}{\rm
Let $T\in\mathcal{B}(X)$ and $\alpha>0.$ We can write $T^j=(k^{-\alpha}*\Delta^{-\alpha} \mathcal{T})(j).$ \begin{itemize}
\item[(i)]Let $m$ be a positive integer and $T$ a $(C,m)$-bounded operator. Since $k^{-m}\in c_{0,0}(\N_0),$ then $\lVert T^j\rVert=O(j^{m}).$ So, if $\mathfrak{f}$ belongs to the Korenblyum subalgebra ($\sum_{j\geq 0}j^m|\widehat{\mathfrak{f}}(j)|<\infty$) we have that $\sum_{j\geq 0}\widehat{\mathfrak{f}}(j)T^j$ converges in operator norm. Moreover, by induction method, \begin{equation*}\begin{array}{l}
    \displaystyle \theta_m(\widehat{\mathfrak{f}})=\lim_{N\to\infty}\sum_{n=0}^{N}W_+^m\widehat{\mathfrak{f}}(n)\Delta^{-m}\mathcal{T}(n)=\lim_{N\to\infty}\biggl(\sum_{j=0}^N\widehat{\mathfrak{f}}(j)T^j\\
    \displaystyle+(-1)^{m+1}\sum_{j=0}^{m-1}W^j_+\widehat{\mathfrak{f}}(N+1)\Delta^{-(j+1)}\mathcal{T}(N)\biggr)=\lim_{N\to\infty}\biggl(\sum_{j=0}^N\widehat{\mathfrak{f}}(j)T^j\\
    \displaystyle+(-1)^{m+1}\sum_{j=0}^{m-1}\left(\sum_{l=0}^{j}(-1)^l\binom{j}{l}\widehat{\mathfrak{f}}(N+1+l)\right)\Delta^{-(j+1)}\mathcal{T}(N)\biggr)\\
    \displaystyle=\sum_{j=0}^\infty\widehat{\mathfrak{f}}(j)T^j.
    \end{array}\end{equation*}

\item[(ii)]Let $\alpha>0$ be a non positive integer and $T$ a $(C,\alpha)$-bounded operator. First observe that the sign of $k^{-\alpha}(j)$ is $(-1)^{[\alpha]+1}$ for all $j\geq[\alpha]+1.$ For $j\geq[\alpha]+1,$ note that \begin{equation*}\begin{array}{l}\lVert T^j\rVert\leq C \displaystyle\sum_{n=0}^j|k^{-\alpha}(j-n)|k^{\alpha+1}(n)\\
    =C\left(  (-1)^{[\alpha]+1}\displaystyle\sum_{n=0}^{j-[\alpha]-1}k^{-\alpha}(j-n)k^{\alpha+1}(n)+\displaystyle\sum_{n=j-[\alpha]}^{j}|k^{-\alpha}(j-n)|k^{\alpha+1}(n)\right)\\
    =C\biggl(  (-1)^{[\alpha]+1}\displaystyle\sum_{n=0}^{j}k^{-\alpha}(j-n)k^{\alpha+1}(n)\\
    +\displaystyle\sum_{n=j-[\alpha]}^{j}(|k^{-\alpha}(j-n)|-(-1)^{[\alpha]+1}k^{-\alpha}(j-n))k^{\alpha+1}(n)\biggr)\\
    \leq C_{\alpha}\biggl( (-1)^{[\alpha]+1}+k^{\alpha+1}(j) \biggr),
    \end{array}\end{equation*}
    where we have used that $k^{\alpha+1}$ is increasing and $k^{-\alpha}*k^{\alpha+1}=k^1.$ Then $\lVert T^j\rVert=O(j^{\alpha}).$ So, if $\mathfrak{f}$ belongs to the extended Korenblyum subalgebra ($\sum_{j\geq 0}j^\alpha|\widehat{\mathfrak{f}}(j)|<\infty$) we have that $\sum_{j\geq 0}\widehat{\mathfrak{f}}(j)T^j$ converges in operator norm. Moreover, \begin{equation*}\begin{array}{l}
    \displaystyle \theta_\alpha(\widehat{\mathfrak{f}})=\lim_{N\to\infty}\sum_{n=0}^{N}W_+^\alpha\widehat{\mathfrak{f}}(n)\Delta^{-\alpha}\mathcal{T}(n)\\
    \displaystyle=\lim_{N\to\infty}\sum_{j=0}^NT^j\sum_{n=j}^Nk^{\alpha}(n-j)\sum_{l=n}^{\infty}k^{-\alpha}(l-n)\widehat{\mathfrak{f}}(l)\\
    \displaystyle=\lim_{N\to\infty}\sum_{j=0}^NT^j\left(\widehat{\mathfrak{f}}(j)+\sum_{l=N+1}^{\infty}\widehat{\mathfrak{f}}(l)\sum_{n=j}^Nk^{-\alpha}(l-n)k^{\alpha}(n-j)\right)\\
    \displaystyle=\lim_{N\to\infty}\left(\sum_{j=0}^NT^j\widehat{\mathfrak{f}}(j)+\sum_{l=N+1}^{\infty}\widehat{\mathfrak{f}}(l)\sum_{n=0}^Nk^{-\alpha}(l-n)\Delta^{-\alpha}\mathcal{T}(n)\right)\\
    \displaystyle=\sum_{j=0}^\infty\widehat{\mathfrak{f}}(j)T^j,
    \end{array}\end{equation*}
    where we have applied that $k^{\alpha+1}$ is increasing, $\sum_{n=0}^N|k^{-\alpha}(l-n)|\leq\sum_{n=0}^\infty|k^{-\alpha}(n)|\leq C_{\alpha},$ and $$\sum_{l=N+1}^{\infty}|\widehat{\mathfrak{f}}(l)|k^{\alpha+1}(N)\leq \sum_{l=N+1}^{\infty}|\widehat{\mathfrak{f}}(l)|k^{\alpha+1}(l)\to 0$$ as $N\to\infty.$
\end{itemize}
}
\end{remark}


\begin{theorem}\label{main} Let $\alpha> 0,$ $T\in\mathcal{B}(X)$ be a $(C,\alpha)$-bounded operator and $\mathfrak{f}\in A_+^{\alpha}(\mathbb{T})$ be of spectral synthesis in $A^{\alpha}(\mathbb{T})$ with respect to $\sigma(T)\cap \mathbb{T}.$ Then $$\displaystyle\lim_{n\to\infty}\lVert M_T^{\alpha}(n)\theta_{\alpha}(\widehat{\mathfrak{f}}) \rVert=0.$$
\end{theorem}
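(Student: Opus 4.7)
The plan is to transfer the problem entirely into $A^{\alpha}(\T)$ and reduce the statement to a limit of norms of a single explicit family. Define
\[
\phi_n(t):=\sum_{j=0}^n k^{\alpha}(n-j)e^{ijt},\qquad \Phi_n:=\phi_n/k^{\alpha+1}(n).
\]
Using Proposition \ref{other} and the semigroup identity $k^{-\alpha}*k^{\alpha}=\delta_0$, one checks that $W_+^{\alpha}\widehat{\phi_n}=\delta_n$; consequently $\theta_{\alpha}(\widehat{\phi_n})=\Delta^{-\alpha}\mathcal{T}(n)$ and $\|\phi_n\|_{A_+^{\alpha}(\T)}=\|\phi_n\|_{A^{\alpha}(\T)}=k^{\alpha+1}(n)$, so $\|\Phi_n\|_{A^{\alpha}(\T)}=1$. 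The homomorphism property of $\theta_{\alpha}$ then yields the key identity
\[
M_T^{\alpha}(n)\,\theta_{\alpha}(\widehat{\mathfrak{f}})=\theta_{\alpha}\bigl(\widehat{\Phi_n\mathfrak{f}}\bigr),\qquad n\in\N_0,
\]
and the task reduces to proving $\|\theta_{\alpha}(\widehat{\Phi_n\mathfrak{f}})\|_{\mathcal{B}(X)}\to 0$.

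For the reduction via spectral synthesis, fix $\varepsilon>0$ and pick $\mathfrak{g}_{\varepsilon}\in A^{\alpha}(\T)$ vanishing on an open neighborhood $V_{\varepsilon}$ of $E:=\sigma(T)\cap\T$ with $\|\mathfrak{f}-\mathfrak{g}_{\varepsilon}\|_{A^{\alpha}}<\varepsilon$. Let $P_+:A^{\alpha}(\T)\to A_+^{\alpha}(\T)$ be the projection that zeroes the negative Fourier coefficients; Proposition \ref{prop2.5}(i) together with the fact that $W_-^{\alpha}$ annihilates at negative indices any sequence supported on $\N_0$ implies $\|P_+\|\le 1$. Since $\Phi_n\mathfrak{f}\in A_+^{\alpha}(\T)$ is fixed by $P_+$, one may split
\[
\Phi_n\mathfrak{f}=P_+(\Phi_n\mathfrak{g}_{\varepsilon})+P_+\bigl(\Phi_n(\mathfrak{f}-\mathfrak{g}_{\varepsilon})\bigr),
\]
and the Banach-algebra inequality of Theorem \ref{algebras} combined with $\|\Phi_n\|_{A^{\alpha}}=1$ and $\|P_+\|\le 1$ bounds the second summand in $A_+^{\alpha}$ by $C_{\alpha}\varepsilon$; the boundedness of $\theta_{\alpha}$ then contributes only $O(\varepsilon)$ to $\|M_T^{\alpha}(n)\theta_{\alpha}(\widehat{\mathfrak{f}})\|$, uniformly in $n$.

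The decisive step, which I expect to be the main obstacle, is the limit
\[
\bigl\|\theta_{\alpha}\bigl(\widehat{P_+(\Phi_n\mathfrak{g})}\bigr)\bigr\|_{\mathcal{B}(X)}\longrightarrow 0\qquad(n\to\infty)
\]
for every $\mathfrak{g}\in A^{\alpha}(\T)$ vanishing on a fixed open neighborhood of $E$. Writing $\phi_n(t)=e^{int}Q_n(e^{-it})$ with $Q_n(z):=\sum_{l=0}^n k^{\alpha}(l)z^l$, the asymptotics \eqref{double} and the identity $Q_n(z)\to(1-z)^{-\alpha}$ for $|z|<1$ show that $\Phi_n\to 0$ pointwise off $\{t=0\}$ while $\Phi_n(0)=1$, so the mass of $\Phi_n$ concentrates at $1\in\T$. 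Following the strategy of the $C_0$-semigroup argument in \cite[Theorem 5.5]{Ch-To}, I would exploit the regularity of $A^{\alpha}(\T)$ to decompose $\mathfrak{g}$ via a partition of unity subordinate to a finite covering of $\T\setminus V_{\varepsilon}$ by arcs disjoint from $E$. On each such arc the resolvent $(\zeta-T)^{-1}$ is holomorphic in a full complex neighborhood, so the localized operators admit a representation as contour integrals of $(\zeta-T)^{-1}$ over curves pushed strictly inside $\mathbb{D}$; combining the Ces\`aro-bound $\|\Delta^{-\alpha}\mathcal{T}(j)\|\le Ck^{\alpha+1}(j)$, the pointwise decay of $\Phi_n$, and a dominated-convergence argument at the level of Weyl differences should then yield the desired $o(1)$ estimate.

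Finally, letting $\varepsilon\to 0^+$ concludes the proof. The principal technical difficulty is the third step: quantifying, in the $A_+^{\alpha}$-norm of the projected product, the cancellation between the rapid oscillation and concentration of $\Phi_n$ and the vanishing of $\mathfrak{g}_{\varepsilon}$ on a neighborhood of $E$, without the exponential decay of $T^n$ that is available in the classical power-bounded setting. The substitutes are the Weyl-difference machinery of Section 2 and the sharp growth estimate $\|\Delta^{-\alpha}\mathcal{T}(j)\|\le Ck^{\alpha+1}(j)$ provided by $(C,\alpha)$-boundedness.
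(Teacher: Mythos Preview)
Your reduction is correct and matches the paper: $\widehat{\phi_n}$ is precisely the sequence $h_n^{\alpha}$ used there, and the identity $M_T^{\alpha}(n)\theta_{\alpha}(\widehat{\mathfrak{f}})=\theta_{\alpha}(\widehat{\Phi_n\mathfrak{f}})$ is the content of (3.1). Your treatment of the approximation error is actually cleaner than the paper's: invoking the Banach-algebra inequality of Theorem \ref{algebras} together with $\|\Phi_n\|_{A^{\alpha}}=1$ and the contractivity of $P_+$ replaces the paper's explicit expansion of $W_+^{\alpha}(h_n^{\alpha}*g_{\varepsilon})$ via Lemma \ref{lemma2.9} and \eqref{eq2.7} into five separate sums, each bounded by $C_{\alpha}\varepsilon$.

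The gap is the third step, which you flag as the obstacle but do not prove; phrases like ``I would exploit'' and ``should then yield'' leave the argument undone. The proposed mechanism is also problematic. Pushing contours ``strictly inside $\mathbb{D}$'' requires resolvent points there, but for a $(C,\alpha)$-bounded operator one only has $r(T)\le 1$ (Lemma \ref{radius}); the spectrum may occupy all of $\overline{\mathbb{D}}$ except an arbitrarily thin tube around the arc $\T\setminus E$, so no uniform contour with $|\zeta|<1$ exists and no exponential gain is available. Moreover, $\mathfrak{g}_{\varepsilon}$ lives only on $\T$, not in a complex neighborhood of $\sigma(T)$, so there is no Riesz--Dunford representation of $\theta_{\alpha}(\widehat{P_+(\Phi_n\mathfrak{g}_{\varepsilon})})$ to deform in the first place; nothing in the sketch explains how pointwise decay of $\Phi_n$ becomes operator-norm decay.

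The paper's route for this step is entirely different and avoids contours. A direct Fourier computation combined with the resolvent formula $(\lambda-T)^{-1}=((\lambda-1)/\lambda)^{\alpha}\sum_{j\ge 0}\lambda^{-j-1}\Delta^{-\alpha}\mathcal{T}(j)$ for $|\lambda|>1$ and Parseval's identity (letting $\lambda\to 1^+$) gives
\[
\sum_{j\ge 0}W_+^{\alpha}(h_n^{\alpha}*\widehat{\mathfrak{f}_{\varepsilon}})(j)\,\Delta^{-\alpha}\mathcal{T}(j)=\sum_{j=0}^n k^{\alpha}(n-j)\,\widehat{G}(j),
\]
where $G(t)=e^{-it}\mathfrak{f}_{\varepsilon}(-t)(e^{-it}-T)^{-1}$ is a continuous $\mathcal{B}(X)$-valued function on $\T$ precisely because $\mathfrak{f}_{\varepsilon}$ vanishes on a neighborhood of $\sigma(T)\cap\T$. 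Riemann--Lebesgue gives $\|\widehat{G}(j)\|\to 0$, and then the elementary fact that the $(C,\alpha)$-averages of a null sequence tend to $0$ finishes the argument. The only input from $T$ is the boundary resolvent on $\T\setminus E$; no partition of unity, contour deformation, or dominated-convergence step is needed.
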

\begin{proof}

Let $\mathfrak{f}$ be in $A^{\alpha}_+(\mathbb{T})$ of spectral synthesis in $A^{\alpha}(\mathbb{T})$ with respect to $\sigma(T)\cap \mathbb{T},$ that is, for $\varepsilon>0$ there exists  $\mathfrak{f}_{\varepsilon}\in A^{\alpha}(\mathbb{T})$ such that $\lVert \mathfrak{f}-\mathfrak{f}_{\varepsilon} \rVert_{A^{\alpha}(\mathbb{T})}<\varepsilon$ with $\mathfrak{f}_{\varepsilon}= 0$ in a neighborhood $F$ of $\sigma(T)\cap \mathbb{T}.$

Let $(h_n^{\alpha}(j))_{j\in\Z}$ for each $n\in\N_0$ given by \begin{equation*}
h_n^{\alpha}(j):=\left\{\begin{array}{ll}
k^{\alpha}(n-j),&0\leq j\leq n \\
0,& otherwise,
\end{array} \right.
\end{equation*}the natural extension to $\Z$ of the sequences in $\N_0$ defined in \cite[Example 2.5(ii)]{Abadias}. Then note that \begin{equation*}(3.1)\begin{array}{l}
\Delta^{-\alpha} \displaystyle\mathcal{T}(n)\theta_{\alpha}(\widehat{\mathfrak{f}})=\theta_{\alpha}(h_n^{\alpha})\theta_{\alpha}(\widehat{\mathfrak{f}})=\theta_{\alpha}(h_n^{\alpha}*\widehat{\mathfrak{f}})=\displaystyle\sum_{j=0}^{\infty}W_+^{\alpha}(h_n^{\alpha}*\widehat{\mathfrak{f}})(j)\Delta^{-\alpha} \mathcal{T}(j) \\
=\displaystyle\sum_{j=0}^{\infty}W_+^{\alpha}(h_n^{\alpha}*\widehat{\mathfrak{g}_{\varepsilon}})(j)\Delta^{-\alpha} \mathcal{T}(j)+\displaystyle\sum_{j=0}^{\infty}W_+^{\alpha}(h_n^{\alpha}*\widehat{\mathfrak{f}_{\varepsilon}})(j)\Delta^{-\alpha} \mathcal{T}(j),
\end{array}\end{equation*} where we have applied \cite[Theorem 3.5]{Abadias} and $\mathfrak{g}_{\varepsilon}:=\mathfrak{f}-\mathfrak{f}_{\varepsilon}.$  For convenience we write $f(n)=\widehat{\mathfrak{f}}(n)$ for $n\in\N_0,$ $f_{\varepsilon}(n)=\widehat{\mathfrak{f}_{\varepsilon}}(n)$ and $g_{\varepsilon}(n)=\widehat{\mathfrak{g}_{\varepsilon}}(n)=f(n)-f_{\varepsilon}(n)$ for $n\in\Z$ (note that we suppose that $f(n)=0$ for $n<0$ as it is mentioned in the introduction).

On the one hand, we take the first summand. Then using Lemma \ref{lemma2.9}, $W_+^{\alpha}(h_n^{\alpha})=e_n$ (\cite[Example 2.5 (ii)]{Abadias}), \eqref{eq2.7} and Fubini's Theorem we get that

\begin{displaymath}\small{\begin{array}{l}
\displaystyle\sum_{j=0}^{\infty}W_+^{\alpha}(h_n^{\alpha}*g_{\varepsilon})(j)\Delta^{-\alpha} \mathcal{T}(j)=\displaystyle\sum_{j=0}^{n-1}g_{\varepsilon}(j-n)\Delta^{-\alpha} \mathcal{T}(j) \\
+\biggl(\displaystyle\sum_{j=n}^{\infty}\sum_{p=j-n}^j -\displaystyle\sum_{j=0}^{n-1}\sum_{p=j+1}^{\infty} \biggr)k^{\alpha}(p-j+n)W_+^{\alpha}g_{\varepsilon}(p)\Delta^{-\alpha} \mathcal{T}(j) \\
=\displaystyle\sum_{j=0}^{n-1}g_{\varepsilon}(j-n)\Delta^{-\alpha} \mathcal{T}(j) \\
+\biggl(\displaystyle\sum_{p=0}^n\sum_{j=n}^{p+n}+\sum_{p=n+1}^{\infty}\displaystyle\sum_{j=p}^{p+n} -\sum_{p=1}^n\displaystyle\sum_{j=0}^{p-1}-\sum_{p=n+1}^{\infty}\displaystyle\sum_{j=0}^{n-1}\biggr)k^{\alpha}(p-j+n)W_+^{\alpha}g_{\varepsilon}(p)\Delta^{-\alpha} \mathcal{T}(j).
\end{array}}
\end{displaymath}

We now obtain that each term above, when divided by $k^{\alpha+1}(n),$ tends to $0$ as $n\to\infty,$ using that $\lVert \Delta^{-\alpha} \mathcal{T}(j)\rVert \leq C k^{\alpha+1}(j)$ for $j\in\N_0,$ $k^{\alpha+1}(j)$ is increasing as function of $j$ for $\alpha>0,$ the semigroup property of the kernel $k^{\alpha}$ and \eqref{eq2.5}. By Theorem \ref{algebras}, the first term \begin{displaymath}\begin{array}{l}
\displaystyle\frac{1}{k^{\alpha+1}(n)}\sum_{j=0}^{n-1}|g_{\varepsilon}(j-n)| \lVert \Delta^{-\alpha} \mathcal{T}(j)\rVert \\
\displaystyle\leq C \sum_{j=0}^{n-1}|g_{\varepsilon}(j-n)| \leq C \lVert \mathfrak{g}_{\varepsilon} \rVert_{A(\mathbb{T})}\leq C
\lVert \mathfrak{g}_{\varepsilon} \rVert_{A^{\alpha}(\mathbb{T})}  <C \varepsilon.
\end{array}
\end{displaymath}
The second, \begin{displaymath}\begin{array}{l}
\displaystyle\frac{1}{k^{\alpha+1}(n)}\sum_{p=0}^n|W_+^{\alpha}g_{\varepsilon}(p)|\sum_{j=n}^{p+n}k^{\alpha}(p-j+n)\lVert\Delta^{-\alpha} \mathcal{T}(j)\rVert \\
\leq\displaystyle C\sum_{p=0}^n|W_+^{\alpha}g_{\varepsilon}(p)|\frac{k^{\alpha+1}(p+n)}{k^{\alpha+1}(n)}\sum_{j=n}^{p+n}k^{\alpha}(p-j+n) \\
\displaystyle =C\sum_{p=0}^n|W_+^{\alpha}g_{\varepsilon}(p)|\frac{k^{\alpha+1}(p+n)}{k^{\alpha+1}(n)}k^{\alpha+1}(p) \leq C\sum_{p=0}^n|W_+^{\alpha}g_{\varepsilon}(p)|\frac{k^{\alpha+1}(2n)}{k^{\alpha+1}(n)}k^{\alpha+1}(p) \\
\displaystyle \leq  C_{\alpha}\sum_{p=0}^n|W_+^{\alpha}g_{\varepsilon}(p)|k^{\alpha+1}(p)\leq  C_{\alpha}\lVert \mathfrak{g}_{\varepsilon}\rVert_{A^{\alpha}(\mathbb{T})}< C_{\alpha}\varepsilon.
\end{array}
\end{displaymath}
The third term, \begin{displaymath}\begin{array}{l}
\displaystyle\frac{1}{k^{\alpha+1}(n)}\sum_{p=n+1}^{\infty}|W_+^{\alpha}g_{\varepsilon}(p)|\sum_{j=p}^{p+n}k^{\alpha}(p-j+n)\lVert\Delta^{-\alpha} \mathcal{T}(j)\rVert \\
\leq\displaystyle C\sum_{p=n+1}^{\infty}|W_+^{\alpha}g_{\varepsilon}(p)|\frac{k^{\alpha+1}(p+n)}{k^{\alpha+1}(n)}\sum_{j=p}^{p+n}k^{\alpha}(p-j+n) \\
\displaystyle =C\sum_{p=n+1}^{\infty}|W_+^{\alpha}g_{\varepsilon}(p)|k^{\alpha+1}(p+n)
\displaystyle \leq  C_{\alpha}\sum_{p=n+1}^{\infty}|W_+^{\alpha}g_{\varepsilon}(p)|k^{\alpha+1}(p)< C_{\alpha}\varepsilon,
\end{array}
\end{displaymath}
the fourth \begin{displaymath}\begin{array}{l}
\displaystyle\frac{1}{k^{\alpha+1}(n)}\sum_{p=1}^n|W_+^{\alpha}g_{\varepsilon}(p)|\sum_{j=0}^{p-1}k^{\alpha}(p-j+n)\lVert\Delta^{-\alpha} \mathcal{T}(j)\rVert \\
\leq\displaystyle C\sum_{p=1}^n|W_+^{\alpha}g_{\varepsilon}(p)|\frac{k^{\alpha+1}(p)}{k^{\alpha+1}(n)}\sum_{j=0}^{p-1}k^{\alpha}(p-j+n) \\
\leq\displaystyle C\sum_{p=1}^n|W_+^{\alpha}g_{\varepsilon}(p)|\frac{k^{\alpha+1}(p)}{k^{\alpha+1}(n)}\sum_{j=0}^{p+n}k^{\alpha}(p-j+n)\\
= \displaystyle C\sum_{p=1}^n|W_+^{\alpha}g_{\varepsilon}(p)|\frac{k^{\alpha+1}(p)}{k^{\alpha+1}(n)}k^{\alpha+1}(p+n) \\
\displaystyle \leq C_{\alpha}\sum_{p=1}^n|W_+^{\alpha}g_{\varepsilon}(p)|k^{\alpha+1}(p)<C_{\alpha}\varepsilon,
\end{array}
\end{displaymath}
and the fifth \begin{displaymath}\begin{array}{l}
\displaystyle\frac{1}{k^{\alpha+1}(n)}\sum_{p=n+1}^{\infty}|W_+^{\alpha}g_{\varepsilon}(p)|\sum_{j=0}^{n-1}k^{\alpha}(p-j+n)\lVert\Delta^{-\alpha} \mathcal{T}(j)\rVert\\
\leq\displaystyle C\sum_{p=n+1}^{\infty}|W_+^{\alpha}g_{\varepsilon}(p)|\sum_{j=0}^{n-1}k^{\alpha}(p-j+n) \\
\leq\displaystyle C\sum_{p=n+1}^{\infty}|W_+^{\alpha}g_{\varepsilon}(p)|\sum_{j=0}^{p+n}k^{\alpha}(p-j+n)= \displaystyle C\sum_{p=n+1}^{\infty}|W_+^{\alpha}g_{\varepsilon}(p)|k^{\alpha+1}(p+n) \\
\displaystyle \leq  C_{\alpha}\sum_{p=n+1}^{\infty}|W_+^{\alpha}g_{\varepsilon}(p)|k^{\alpha+1}(p)<C_{\alpha}\varepsilon.
\end{array}
\end{displaymath}

On the other hand, for the second term in (3.1), we have to prove that $$\displaystyle\lim_{n\to\infty}\frac{1}{k^{\alpha+1}(n)}\displaystyle\sum_{j=0}^{\infty}W_+^{\alpha}(h_n^{\alpha}*f_{\varepsilon})(j)\Delta^{-\alpha} \mathcal{T}(j)=0.$$ It is known that $\displaystyle(\lambda-T)^{-1}=\biggl(\frac{\lambda-1}{\lambda}\biggr)^{\alpha}\sum_{n=0}^{\infty}\lambda^{-n-1}\Delta^{-\alpha} \mathcal{T}(n),$ for $|\lambda|>1,$ see \cite[Theorem 4.11 (iii)]{Abadias}. Note that $h_n^{\alpha}*f_{\varepsilon}\in \tau^{\alpha}(|n|^{\alpha}),$ then, if $m=[\alpha]+1,$ we get
\begin{displaymath}
\begin{array}{l}
\displaystyle\sum_{j=-\infty}^{\infty}W_+^{\alpha}(h_{n}^{\alpha}*f_{\varepsilon})(-j)e^{ijt}=\sum_{j=-\infty}^{\infty}W_+^{\alpha}(h_{n}^{\alpha}*f_{\varepsilon})(j)e^{-ijt}\\
=\displaystyle\lim_{\lambda\to 1^+}\biggl( \displaystyle\sum_{j=0}^{\infty}W_+^mW_+^{-(m-\alpha)}(h_{n}^{\alpha}*f_{\varepsilon})(j)(\lambda^{-1} e^{-it})^j\\
+ \displaystyle\sum_{j=-\infty}^{-1}W_+^mW_+^{-(m-\alpha)}(h_{n}^{\alpha}*f_{\varepsilon})(j)(\lambda e^{-it})^j\biggr) \\
=\displaystyle\sum_{l=0}^{m}(-1)^{l} \binom{m}{l}e^{itl}\lim_{\lambda\to 1^+}\biggl(  \displaystyle\sum_{v=l}^{\infty}W_+^{-(m-\alpha)}(h_{n}^{\alpha}*f_{\varepsilon})(v)(\lambda^{-1} e^{-it})^v\\
+ \displaystyle\sum_{v=-\infty}^{l-1}W_+^{-(m-\alpha)}(h_{n}^{\alpha}*f_{\varepsilon})(v)(\lambda e^{-it})^v \biggr)\\
=\displaystyle (1-e^{it})^m\lim_{\lambda\to 1^+}\biggl(  \sum_{u=l}^{\infty}\sum_{v=l}^{u}k^{m-\alpha}(u-v)(\lambda^{-1} e^{-it})^v(h_{n}^{\alpha}*f_{\varepsilon})(u)\\
\displaystyle+ \sum_{u=-\infty}^{l-1}\sum_{v=-\infty}^{u}k^{m-\alpha}(u-v)(\lambda e^{-it})^v(h_{n}^{\alpha}*f_{\varepsilon})(u)
\\
\displaystyle+\sum_{u=l}^{\infty}\sum_{v=-\infty}^{l-1}k^{m-\alpha}(u-v)(\lambda e^{-it})^v(h_{n}^{\alpha}*f_{\varepsilon})(u) \biggr).
\end{array}
\end{displaymath}
Now, using that $$\displaystyle\lim_{\lambda\to 1^+}\sum_{j=0}^{\infty}k^{m-\alpha}(j)(\lambda e^{-it})^{-j}=\frac{1}{(1-e^{it})^{m-\alpha}},\quad t\neq 2\pi\Z,\ 0<m-\alpha<1,$$ see \cite[Section 4]{Abadias}, we have for $t\neq 2\pi\Z$ that
\begin{displaymath}
\begin{array}{l}
\displaystyle\sum_{j=-\infty}^{\infty}W_+^{\alpha}(h_{n}^{\alpha}*f_{\varepsilon})(-j)e^{ijt}\\
=\displaystyle (1-e^{it})^m\biggl(\sum_{u=l}^{\infty}(h_{n}^{\alpha}*f_{\varepsilon})(u)\lim_{\lambda\to 1^+}\biggl(\sum_{v=l}^{u}+ \sum_{v=-\infty}^{l-1}\biggr)k^{m-\alpha}(u-v)(\lambda e^{-it})^v \\
\displaystyle +\sum_{u=-\infty}^{l-1}(h_{n}^{\alpha}*f_{\varepsilon})(u)\lim_{\lambda\to 1^+}\sum_{v=-\infty}^{u}k^{m-\alpha}(u-v)(\lambda e^{-it})^v   \biggr)\\
=\displaystyle(1-e^{it})^{\alpha}\sum_{u=-\infty}^{\infty}(h_{n}^{\alpha}*f_{\varepsilon})(u)e^{-itu}=\displaystyle(1-e^{it})^{\alpha}\mathfrak{f}_{\varepsilon}(-t)\sum_{j=0}^n k^{\alpha}(n-j)e^{-ijt},
\end{array}
\end{displaymath}

If we define $\Delta^{-\alpha} \mathcal{T}(n)=0$ for $n<0,$ note that the operator-valued sequence $(\lambda^{-(j+1)}\Delta^{-\alpha} \mathcal{T}(j))_{j\in\Z}$ for $|\lambda|>1$ is summable. Then Parseval's identity implies that  \begin{displaymath}\begin{array}{l}
\displaystyle\sum_{j=0}^{\infty}W_+^{\alpha}(h_n^{\alpha}*f_{\varepsilon})(j)\Delta^{-\alpha} \mathcal{T}(j)=\lim_{\lambda\to 1^+}\sum_{j=0}^{\infty}W_+^{\alpha}(h_n^{\alpha}*f_{\varepsilon})(j)\lambda^{-(j+1)}\Delta^{-\alpha} \mathcal{T}(j) \\
=\displaystyle\frac{1}{2\pi}\int_0^{2\pi}\mathfrak{f}_{\varepsilon}(-t)\biggl(\sum_{j=0}^{n}k^{\alpha}(n-j)e^{-ijt} \biggr) e^{-it}(e^{-it}-T)^{-1}\,dt\\
=\displaystyle\sum_{j=0}^{n}k^{\alpha}(n-j)\widehat{G}(j),
\end{array}
\end{displaymath}
where $G(t)=e^{-it}\mathfrak{f}_{\varepsilon}(-t)(e^{-it}-T)^{-1}.$  Applying the Riemann-Lebesgue Lemma we get that for all $\delta>0$ there exists a $n_0\in\N$ such that $\lVert \widehat{G}(j)\rVert<\delta$ for all $|j|\geq n_0.$ Then \begin{displaymath}\begin{array}{l}
\displaystyle\frac{1}{k^{\alpha+1}(n)}\lVert  \sum_{j=0}^{n}k^{\alpha}(n-j)\widehat{G}(j)\rVert\leq\frac{1}{k^{\alpha+1}(n)}\biggl(\displaystyle\sum_{j=0}^{n-n_0}+\sum_{j=n-n_0+1}^n\biggr)k^{\alpha}(j)\lVert \widehat{G}(n-j)\rVert \\ \\
\leq\delta +\displaystyle\sum_{j=n-n_0+1}^n \frac{\alpha}{(\alpha+j)}\lVert \widehat{G}(n-j)\rVert\leq\delta+ \frac{\lVert \widehat{G}\rVert_{\infty} (n_0-1)}{\alpha+n-n_0+1},
\end{array}\end{displaymath}
where we have applied that $k^{\alpha+1}(j)$ is increasing as function of $j,$ and $\lVert \widehat{G}\rVert_{\infty}=\sup_{j\geq 0}\lVert \widehat{G}(j)\rVert.$ Taking $n\to\infty$ we get the result.
\end{proof}

\begin{remark}{\rm
Parseval's identity for the product of a scalar-valued function and a vector-valued function, and the Riemann-Lebesgue Lemma for a vector-valued function can be proved by applying linear functionals, and using the scalar-valued results and the Hahn-Banach Theorem. The first reference of these results is \cite{Bochner}. The analogous results for the continuous case are in \cite[Theorem 1.8.1]{ABHN}.
}
\end{remark}

\begin{remark} {\rm
When $T$ is a power-bounded operator, the proof of Theorem \ref{main} gives a short and alternative  proof of the  Katznelson-Tzafriri theorem (\cite[Theorem 5]{Katznelson}), as we show  in the following lines:

Let $\mathfrak{f}$ be in $A_+(\mathbb{T})$ of spectral synthesis in $A(\mathbb{T})$ with respect to $\sigma(T)\cap \mathbb{T},$ that is, for $\varepsilon>0$ there exists  $\mathfrak{f}_{\varepsilon}\in A(\mathbb{T})$ such that $\lVert \mathfrak{f}-\mathfrak{f}_{\varepsilon} \rVert_{A(\mathbb{T})}<\varepsilon$ with $\mathfrak{f}_{\varepsilon}= 0$ in a neighborhood $F$ of $\sigma(T)\cap \mathbb{T}.$ We denote by $(\mathcal{T}(n))_{n\in\Z}$ the family of operators given by $\mathcal{T}(n)=T^n$ for $n\in\N_0$ and $\mathcal{T}(n)=0$ for $n<0.$ Then it is clear that $$\lVert \displaystyle\sum_{j=-\infty}^{\infty}\widehat{\mathfrak{f}_{\varepsilon}}(j)\mathcal{T}(n+j)-T^n\theta(\widehat{\mathfrak{f}}) \rVert<C \varepsilon,$$ since $\lVert T^n\rVert\leq C$ for all $n\in\N_0.$ Now, using Parseval's identity, we get \begin{eqnarray*}
\displaystyle\sum_{j=-\infty}^{\infty}\widehat{\mathfrak{f}_{\varepsilon}}(j)\mathcal{T}(n+j)&=& \displaystyle\lim_{\lambda\to1^+}\sum_{j=-\infty}^{\infty}\widehat{\mathfrak{f}_{\varepsilon}}(j)\lambda^{-(n+j+1)}\mathcal{T}(n+j) \\
&=&\displaystyle\lim_{\lambda\to1^+}\frac{1}{2\pi}\int_{0}^{2\pi}e^{-it(n+1)}\mathfrak{f}_{\varepsilon}(-t)(\lambda e^{-it}-T)^{-1}\,dt \\
&=&\frac{1}{2\pi}\int_{0}^{2\pi}e^{-it(n+1)}\mathfrak{f}_{\varepsilon}(-t)(e^{-it}-T)^{-1}\,dt,
\end{eqnarray*}
which converges to 0 by Riemann-Lebesgue Lemma, and we conclude the proof.}

\end{remark}

\section{Ergodic applications}
\setcounter{theorem}{0}
\setcounter{equation}{0}

Several authors have investigated the connections between the stability of the Ces\`aro mean differences of size $n$ and $n+1,$ that is, \begin{equation}\label{meanDifferences}\displaystyle\lim_{n\to\infty}\lVert M_T^{\alpha}(n+1)\,-\,M_T^{\alpha}(n)\rVert=0,\end{equation}
and spectral conditions for $(C, \alpha)$-bounded operators $T\in\mathcal{B}(X),$ see \cite{Su-Ze13} and references therein. We can not get \eqref{meanDifferences} using directly Theorem \ref{main} because this problem is equivalent to find a sequence $f\in\tau^{\alpha}(n^{\alpha})$ such that the identity $$\frac{1}{k^{\alpha+1}(n)}(h_n^{\alpha}*f)=\frac{1}{k^{\alpha+1}(n)}h_n^{\alpha}-\frac{1}{k^{\alpha+1}(n+1)}h_{n+1}^{\alpha}$$ holds for all $n\in\N_0,$ which has not solution. However the following theorem shows how using Theorem \ref{main} and other techniques we get the desired result, which is a consequence of \cite[Theorem 2.2(ii) and Theorem 3.1(i)]{Su-Ze13} for the case $\alpha\in\N=\{1,2,\ldots\}.$


\begin{theorem}\label{ergodicity1} Let $\alpha> 0$ and $T\in\mathcal{B}(X)$ be a $(C,\alpha)$-bounded operator such that $\sigma(T)\cap \mathbb{T}\subseteq\{1\}.$ Then $$\displaystyle\lim_{n\to\infty}\lVert M_T^{\alpha}(n+1)\,-\,M_T^{\alpha}(n)\rVert=0.$$
\end{theorem}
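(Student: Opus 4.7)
The plan is to reduce the claim to $\|(I-T)M_T^\alpha(n)\|\to 0$ by an algebraic identity and then obtain this convergence as a direct consequence of Theorem \ref{main} applied to $\mathfrak{f}(t)=1-e^{it}\in A_+^\alpha(\mathbb{T})$. First, from the definition of the Ces\`aro sum one immediately checks the recursion $\Delta^{-\alpha}\mathcal{T}(n+1)=T\,\Delta^{-\alpha}\mathcal{T}(n)+k^\alpha(n+1)I$; dividing by $k^{\alpha+1}(n+1)$ and using $k^{\alpha+1}(n)/k^{\alpha+1}(n+1)=(n+1)/(n+\alpha+1)$ and $k^\alpha(n+1)/k^{\alpha+1}(n+1)=\alpha/(n+\alpha+1)$, one obtains
\[
M_T^\alpha(n+1)-M_T^\alpha(n)=\frac{n+1}{n+\alpha+1}(T-I)M_T^\alpha(n)+\frac{\alpha}{n+\alpha+1}\bigl(I-M_T^\alpha(n)\bigr).
\]
Since $\sup_n\|M_T^\alpha(n)\|<\infty$ by $(C,\alpha)$-boundedness, the second summand is $O(1/n)$, so the problem reduces to proving $\|(I-T)M_T^\alpha(n)\|\to 0$.

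For this, take $\mathfrak{f}(t)=1-e^{it}$, a trigonometric polynomial with $\widehat{\mathfrak{f}}(0)=1$, $\widehat{\mathfrak{f}}(1)=-1$ and zero elsewhere. Using $k^{-\alpha}(0)=1$ and $k^{-\alpha}(1)=-\alpha$ in Proposition \ref{other} gives $W_+^\alpha\widehat{\mathfrak{f}}(0)=1+\alpha$ and $W_+^\alpha\widehat{\mathfrak{f}}(1)=-1$, so
\[
\theta_\alpha(\widehat{\mathfrak{f}})=(1+\alpha)\Delta^{-\alpha}\mathcal{T}(0)-\Delta^{-\alpha}\mathcal{T}(1)=(1+\alpha)I-(\alpha I+T)=I-T.
\]
Since $M_T^\alpha(n)$ is a polynomial in $T$, it commutes with $I-T$, so once Theorem \ref{main} is applied we will obtain $\|(I-T)M_T^\alpha(n)\|=\|M_T^\alpha(n)\theta_\alpha(\widehat{\mathfrak{f}})\|\to 0$, provided that $\mathfrak{f}$ is of spectral synthesis in $A^\alpha(\mathbb{T})$ with respect to $\sigma(T)\cap\mathbb{T}\subseteq\{1\}$, i.e.\ with respect to $t=0$.

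The remaining task, which is the technical heart of the argument, is to verify that spectral synthesis. I would fix $\rho\in C_c^\infty(\mathbb{R})$ with $\rho\equiv 1$ near $0$ and $\mathrm{supp}\,\rho\subset(-2,2)$, set $\rho_\eta(t)=\rho(t/\eta)$, and take the approximant $\mathfrak{f}_\eta:=(1-e^{it})(1-\rho_\eta)\in A^\alpha(\mathbb{T})$, which vanishes on a neighborhood of $0$; it then suffices to show $\|(1-e^{it})\rho_\eta\|_{A^\alpha(\mathbb{T})}\to 0$ as $\eta\to 0^+$. From the generating-function identity $\sum_{m\ge 0}k^\beta(m)z^m=(1-z)^{-\beta}$, the operator $W_+^\alpha$ corresponds on the Fourier-series side to multiplication by $(1-e^{-it})^\alpha$, and consequently $W_+^\alpha\widehat{(1-e^{it})\rho_\eta}(n)=\widehat{H_\eta}(n)$ with $H_\eta(t):=(1-e^{-it})^\alpha(1-e^{it})\rho_\eta(t)$. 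Because $|(1-e^{-it})^\alpha(1-e^{it})|\le C|t|^{\alpha+1}$ near $0$ and $H_\eta$ is supported in $|t|\le 2\eta$, a scaling $t=\eta s$ combined with integration by parts up to the order of smoothness permitted by $|t|^{\alpha+1}$ yields $|\widehat{H_\eta}(n)|\le C_N\eta^{\alpha+2}(1+\eta|n|)^{-N}$ for some $N>\alpha+1$; splitting the sum at $|n|=1/\eta$ and summing against the weight $k^{\alpha+1}(n)\sim n^\alpha$ then gives $\sum_{n\ge 0}k^{\alpha+1}(n)|\widehat{H_\eta}(n)|\le C\eta$, and the analogous estimate for $n<0$ uses $(1-e^{it})^\alpha$ in place of $(1-e^{-it})^\alpha$.

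The main obstacle will be this last Fourier-analytic step: without the extra factor $(1-e^{-it})^\alpha$ coming from $W_+^\alpha$, the cutoff function $(1-e^{it})\rho_\eta$ only vanishes to first order at $0$ and the naive estimate fails for $\alpha\ge 1$; it is precisely the $(1-e^{-it})^\alpha$ factor that promotes the vanishing order of $H_\eta$ from $1$ to $\alpha+1$ and makes the weighted norm tend to zero. Once the spectral synthesis is in hand, Theorem \ref{main} gives $\|(I-T)M_T^\alpha(n)\|\to 0$, and combined with the algebraic identity of the first paragraph this yields the statement.
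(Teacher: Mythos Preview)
Your proof is correct and, in the algebraic reduction, cleaner than the paper's. The paper splits into two cases: for $\alpha\ge 1$ it passes through the lower-order means via $M_T^\alpha(n)(T-I)=\tfrac{\alpha}{n+1}(M_T^{\alpha-1}(n+1)-I)$, while for $0<\alpha<1$ it writes $k^\alpha=k^{-(1-\alpha)}*k^1$ and carries out a more laborious decomposition before again arriving at $M_T^\alpha(n)(T-I)$. Your single identity
\[
M_T^\alpha(n+1)-M_T^\alpha(n)=\frac{n+1}{n+\alpha+1}(T-I)M_T^\alpha(n)+\frac{\alpha}{n+\alpha+1}(I-M_T^\alpha(n)),
\]
obtained directly from the recursion $\Delta^{-\alpha}\mathcal T(n+1)=T\Delta^{-\alpha}\mathcal T(n)+k^\alpha(n+1)I$, handles all $\alpha>0$ at once and reduces immediately to $\|M_T^\alpha(n)(T-I)\|\to 0$; this is a genuine simplification. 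Both arguments then invoke Theorem~\ref{main} with $\mathfrak f(t)=\pm(e^{it}-1)$.

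You also go further than the paper by sketching why $1-e^{it}$ is of spectral synthesis at $\{1\}$ in $A^\alpha(\mathbb T)$; the paper simply applies Theorem~\ref{main} without comment on this point. Your key observation---that on the Fourier side $W_+^\alpha$ acts as multiplication by $(1-e^{-it})^\alpha$, promoting the vanishing order of the cutoff from $1$ to $\alpha+1$---is exactly the right mechanism, and the scaling estimate $\sum_{n\ge 0}k^{\alpha+1}(n)|\widehat{H_\eta}(n)|\lesssim\eta$ is correct with $N=\alpha+2$ coming from the $|t|^{\alpha+1}$-type singularity (for integer $\alpha$ the factor $(1-e^{-it})^\alpha$ is a trigonometric polynomial and decay is rapid). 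Making the Fourier decay rigorous for non-integer $\alpha$ requires citing or proving the standard asymptotic $\widehat{|t|^\gamma\chi}\,(n)\sim c\,|n|^{-\gamma-1}$ for compactly supported $\chi$, but the outline is sound.
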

\begin{proof}
Observe that if $\sigma(T)\cap \mathbb{T}=\emptyset,$ then $r(T)<1$ by Lemma \ref{radius}, and therefore $\lVert T^n\rVert\to 0$ exponentially; in particular $T$ is power-bounded. So, we shall prove the result when $\sigma(T)\cap \mathbb{T}=\{1\}.$

First we suppose that $\alpha\geq 1.$ Then using the relation $$\frac{n+\alpha+1}{n+1}M_T^{\alpha}(n+1)-M_T^{\alpha}(n)=\frac{\alpha}{n+1}M_T^{\alpha-1}(n+1),\quad n\in\N_0,$$ which is easy to get from the definition of Ces\`aro mean of order $\alpha,$ we can write $$ M_T^{\alpha}(n+1)\,-\,M_T^{\alpha}(n)=\frac{\alpha}{n+1}(M_T^{\alpha-1}(n+1)-I)+\frac{\alpha}{n+1}(I-M_T^{\alpha}(n+1)).$$ Using the identity $$M_T^{\alpha}(n)(T-I)=\frac{\alpha}{n+1}(M_T^{\alpha-1}(n+1)-I), \quad n\in\N_0,$$ which can easily be obtained from the definition of Ces\`aro mean of order $\alpha,$ and applying Theorem \ref{main} to the function $\mathfrak{f}(t)=e^{it}-1$ we get that the first summand goes to zero when $n\to\infty.$ On the other hand, the second summand goes to zero when $n\to \infty$ since $T$ is a $(C,\alpha)$-bounded operator.

Now let $0<\alpha<1.$ Using that $k^{\alpha}=k^{-(1-\alpha)}*k^1,$ we write $$M_T^{\alpha}(n)=\frac{1}{k^{\alpha+1}(n)}\Delta^{-\alpha}\mathcal{T}(n)=\frac{1}{k^{\alpha+1}(n)}(k^{-(1-\alpha)}*\Delta^{-1}\mathcal{T})(n).$$ So we can write \begin{displaymath}\begin{array}{l}
\displaystyle M_T^{\alpha}(n+1)\,-\,M_T^{\alpha}(n)=\frac{k^{-(1-\alpha)}(n+1)}{k^{\alpha+1}(n+1)}I\\
+\displaystyle\sum_{j=0}^n k^{-(1-\alpha)}(n-j)\biggl(\frac{\Delta^{-1}\mathcal{T}(j+1)}{k^{\alpha+1}(n+1)}-\frac{\Delta^{-1}\mathcal{T}(j)}{k^{\alpha+1}(n)}\biggr)\\
\displaystyle=\frac{k^{-(1-\alpha)}(n+1)}{k^{\alpha+1}(n+1)}I+\frac{n+1}{(n+\alpha+1)k^{\alpha+1}(n)}\displaystyle\sum_{j=0}^n k^{-(1-\alpha)}(n-j)T^{j+1} \\
\displaystyle-\frac{\alpha}{(n+\alpha+1)k^{\alpha+1}(n)}\displaystyle\sum_{j=0}^n k^{-(1-\alpha)}(n-j)\Delta^{-1}\mathcal{T}(j),
\end{array}\end{displaymath}
where we have used that $$\displaystyle\frac{\Delta^{-1}\mathcal{T}(j+1)}{k^{\alpha+1}(n+1)}-\frac{\Delta^{-1}\mathcal{T}(j)}{k^{\alpha+1}(n)}=\frac{1}{(n+\alpha+1)k^{\alpha+1}(n)}\biggl((n+1)T^{j+1}-\alpha\Delta^{-1}\mathcal{T}(j)\biggr).$$
If we add and subtract the term $$\frac{n+1}{(n+\alpha+1)k^{\alpha+1}(n)}\displaystyle\sum_{j=0}^n k^{-(1-\alpha)}(n-j)I=\frac{(k^{-(1-\alpha)}*k^1)(n)}{k^{\alpha+1}(n+1)}I=\frac{k^{\alpha}(n)}{k^{\alpha+1}(n+1)}I$$ then \begin{displaymath}\begin{array}{l}
\displaystyle M_T^{\alpha}(n+1)\,-\,M_T^{\alpha}(n)=\frac{k^{\alpha}(n+1)}{k^{\alpha+1}(n+1)}I \\
\displaystyle+\frac{n+1}{(n+\alpha+1)k^{\alpha+1}(n)}\displaystyle\sum_{j=0}^n k^{-(1-\alpha)}(n-j)(T^{j+1}-I) -\frac{\alpha}{(n+\alpha+1)}M^{\alpha}_T(n).
\end{array}\end{displaymath}
The first term of the above identity goes to zero when $n\to\infty$ using \eqref{double}. If we apply Theorem \ref{main} we get that the second term goes to zero since  \begin{eqnarray*}
M_T^{\alpha}(n)(T-I)&=&\frac{1}{k^{\alpha+1}(n)}\displaystyle\sum_{j=0}^n k^{-(1-\alpha)}(n-j)\Delta^{-1}\mathcal{T}(j)(T-I)\\
&=&\frac{1}{k^{\alpha+1}(n)}\displaystyle\sum_{j=0}^n k^{-(1-\alpha)}(n-j)(T^{j+1}-I).
\end{eqnarray*}
Finally, the third term goes to zero when $n\to\infty$ because $T$ is a $(C,\alpha)$-bounded operator.
\end{proof}

Observe that under the assumption of Theorem \ref{ergodicity1}, we have that $T$ is also $(C,[\alpha]+1)$-bounded, so by \cite[Theorem 2.2(ii) and Theorem 3.1(i)]{Su-Ze13} we have that $\frac{1}{n}\lVert M_{T}^{[\alpha]}(n)\rVert\to 0,$ as $n\to\infty,$ and $\lVert T^n\rVert=o(n^{[\alpha]+1}).$ The following result extends \cite[Theorem 2.2(ii)]{Su-Ze13} for $\alpha\geq 1.$

\begin{remark}\label{remark4}{\rm
Before stating the theorem, note that \cite[Theorem 3.1(i)]{Su-Ze13} is valid for any $\alpha\geq 1.$ In fact, it is enough to follow the same steps in the proof.
}
\end{remark}

\begin{theorem}\label{ergodicity} Let $\alpha\geq 1$ and $T\in\mathcal{B}(X)$ be a $(C,\alpha)$-bounded operator such that $\sigma(T)\cap \mathbb{T}\subseteq\{1\}.$ Then $$\displaystyle\lVert M_T^{\alpha-1}(n)\rVert=o(n)\text{ and }\lVert T^n\rVert=o(n^{\alpha}),\text{ as }n\to\infty.$$
\end{theorem}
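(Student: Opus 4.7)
The plan is to establish the two conclusions in sequence: first derive $\|M_T^{\alpha-1}(n)\|=o(n)$, and then bootstrap this to $\|T^n\|=o(n^\alpha)$ by inverting the fractional Ces\`aro sum convolution.

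For the first conclusion, I will apply Theorem \ref{main} to the trigonometric polynomial $\mathfrak{f}(t)=e^{it}-1$. This function lies in $A_+^\alpha(\mathbb{T})$ (its Fourier coefficients are finitely supported, so $\widehat{\mathfrak{f}}\in c_{0,0}(\mathbb{N}_0)\subset\tau^\alpha(n^\alpha)$) and it vanishes on $\{1\}\supseteq\sigma(T)\cap\mathbb{T}$; the spectral-synthesis check is exactly the one implicitly invoked in the proof of Theorem \ref{ergodicity1}. Since $\theta_\alpha(\widehat{\mathfrak{f}})=T-I$, Theorem \ref{main} yields $\|M_T^\alpha(n)(T-I)\|\to 0$. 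Combining this with the algebraic identity
\begin{equation*}
M_T^\alpha(n)(T-I)=\frac{\alpha}{n+1}\bigl(M_T^{\alpha-1}(n+1)-I\bigr)
\end{equation*}
used in the proof of Theorem \ref{ergodicity1}, I obtain $(n+1)^{-1}\|M_T^{\alpha-1}(n+1)-I\|\to 0$, hence $\|M_T^{\alpha-1}(n)\|=o(n)$. By \eqref{double}, this is equivalent to $\|\Delta^{-(\alpha-1)}\mathcal{T}(n)\|=o(n^\alpha)$.

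For the second conclusion, the case $\alpha=1$ follows at once since $M_T^0(n)=T^n$. For $\alpha>1$, I will exploit the convolution identity $k^{-(\alpha-1)}\ast k^{\alpha-1}=\delta_0$ to recover
\begin{equation*}
T^n=\sum_{j=0}^n k^{-(\alpha-1)}(n-j)\,\Delta^{-(\alpha-1)}\mathcal{T}(j).
\end{equation*}
The analogue of \eqref{double} for negative parameter gives $|k^{-(\alpha-1)}(m)|\sim m^{-\alpha}/|\Gamma(1-\alpha)|$, so that $k^{-(\alpha-1)}\in\ell^1(\mathbb{N}_0)$ whenever $\alpha>1$ (for integer $\alpha$ the kernel even has finite support). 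Given $\varepsilon>0$, pick $N$ so that $\|\Delta^{-(\alpha-1)}\mathcal{T}(j)\|\le\varepsilon j^\alpha$ for all $j\ge N$; the tail $j\ge N$ then contributes at most $\varepsilon n^\alpha\|k^{-(\alpha-1)}\|_1$, while the finite head $j<N$ contributes $O(n^{-\alpha})$ using the decay of $k^{-(\alpha-1)}$ at $n-j$ of order $n$. Dividing by $n^\alpha$ and letting first $n\to\infty$ and then $\varepsilon\to 0$ gives $\|T^n\|=o(n^\alpha)$.

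The principal obstacle is the deconvolution step for non-integer $\alpha$: one must control the $\ell^1$-tail of $k^{-(\alpha-1)}$, which is only summable because $\alpha>1$. As an alternative, the second conclusion is provided directly by Remark \ref{remark4}, which extends \cite[Theorem 3.1(i)]{Su-Ze13} to any $\alpha\ge 1$ and converts the estimate $\|M_T^{\alpha-1}(n)\|=o(n)$ into $\|T^n\|=o(n^\alpha)$ along the same lines as the integer case.
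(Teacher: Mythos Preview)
Your proposal is correct, and for the first conclusion it is essentially the paper's argument in streamlined form: the paper obtains $\lVert M_T^{\alpha-1}(n)\rVert=o(n)$ by first invoking Theorem~\ref{ergodicity1} (whose proof already contains your key step, namely Theorem~\ref{main} applied to $\mathfrak{f}(t)=e^{it}-1$ together with the identity $M_T^\alpha(n)(T-I)=\tfrac{\alpha}{n+1}(M_T^{\alpha-1}(n+1)-I)$) and then appealing to Remark~\ref{remark4}; you simply extract that computation directly. For the second conclusion both you and the paper start from the same deconvolution $T^n=(k^{-(\alpha-1)}*\Delta^{-(\alpha-1)}\mathcal{T})(n)$, but the estimates diverge: the paper exploits the fact that $k^{-(\alpha-1)}(m)$ has constant sign $(-1)^{[\alpha]}$ for $m\ge[\alpha]$, so that up to a finite correction the sum collapses via the exact identity $k^{-(\alpha-1)}*k^{\alpha+1}=k^2$, whereas you use instead the absolute summability $\sum_m |k^{-(\alpha-1)}(m)|<\infty$ (from $|k^{-(\alpha-1)}(m)|\sim c\,m^{-\alpha}$ with $\alpha>1$, or finite support when $\alpha\in\mathbb{N}$) and a standard $\varepsilon$--$N$ splitting. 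Your route is a bit more transparent and avoids the sign bookkeeping; the paper's route makes slightly sharper use of the convolution structure. One minor remark: your ``alternative'' via Remark~\ref{remark4} is not quite how the paper uses that remark---there it feeds into the first conclusion rather than the second---but this does not affect the validity of your primary argument.
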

\begin{proof}
By Theorem \ref{ergodicity1} and Remark \ref{remark4} we have that $\lVert M_T^{\alpha-1}(n)\rVert=o(n)$. Now, we suppose that $\alpha>1$ for convenience (for $\alpha=1$ the result is proved). We can write $T^n=(k^{-(\alpha-1)}*\Delta^{-(\alpha-1)} \mathcal{T})(n),$ and we have mentioned in the previous section that the sign of $k^{-(\alpha-1)}(n)$ is $(-1)^{[\alpha]}$ for all $n\geq[\alpha].$ For $n\geq[\alpha],$ note that \begin{equation*}\begin{array}{l}\lVert T^n\rVert\leq \displaystyle\sum_{j=0}^n|k^{-(\alpha-1)}(n-j)|\lVert \Delta^{-(\alpha-1)} \mathcal{T})(j\rVert\\
=(-1)^{[\alpha]}\displaystyle\sum_{j=0}^{n}k^{-(\alpha-1)}(n-j)\lVert \Delta^{-(\alpha-1)} \mathcal{T}(j)\rVert\\
+\displaystyle\sum_{j=n-[\alpha]+1}^{n}(|k^{-(\alpha-1)}(n-j)|-(-1)^{[\alpha]}k^{-(\alpha-1)}(n-j))\lVert \Delta^{-(\alpha-1)} \mathcal{T}(j)\rVert=I+II.\\
\end{array}\end{equation*}
By $\lVert M_T^{\alpha-1}(n)\rVert=o(n)$ we have that $\lVert \Delta^{-(\alpha-1)} \mathcal{T}(n)\rVert\leq Ck^{\alpha+1}(n),$ then $$\frac{|I|}{n^{\alpha}}\leq \frac{Ck^{2}(n)}{n^{\alpha}}\to 0,\quad n\to\infty.$$ Secondly,
\begin{equation*}
\begin{array}{l}
\displaystyle\frac{|II|}{n^{\alpha}}\leq \frac{C_{\alpha}}{n^{\alpha}}\displaystyle\sum_{j=n-[\alpha]+1}^{n}\lVert \Delta^{-(\alpha-1)} \mathcal{T}(j)\rVert\\
\displaystyle = \frac{C_{\alpha}}{n^{\alpha}}\displaystyle\sum_{u=0}^{[\alpha]-1}k^{\alpha}(u+n-[\alpha]+1)\lVert  M_T^{\alpha-1}(u+n-[\alpha]+1)\rVert\to 0,\quad n\to\infty.
\end{array}
\end{equation*}

\end{proof}

\subsection*{Acknowledgements} The author thanks Ralph Chill, Jos\'{e} E. Gal\'{e}, Pedro J. Miana, Daniel J. Rodriguez, Francisco J. Ruiz and the anonymous referee for pieces of advice, comments and nice ideas that have contributed to improve the paper. This paper was partially written during a research visit at the Technical University of Dresden under the supervision of Ralph Chill. The author has been partially supported by Project MTM2013-42105-P, DGI-FEDER, of the MCYTS, and Project E-64, D.G. Arag\'on.

\end{document}